\documentclass[12pt,a4paper]{article}
\usepackage{latexsym,amssymb,amsfonts,amsmath,amsthm,nccmath,float,enumitem,setspace,bm,authblk,graphicx,arydshln}
\usepackage[usenames,dvipsnames]{xcolor}
\usepackage[hidelinks]{hyperref}
\usepackage[margin=1.83cm]{geometry}
\usepackage{booktabs}

\hypersetup{
    colorlinks,
    linkcolor={blue!80!black},
    citecolor={blue!80!black},
    urlcolor={blue!80!black}
}

\newtheorem{Theorem}{Theorem}[section]

\newtheorem{Corollary}{Corollary}[section]

\newtheorem{Lemma}{Lemma}[section]

\def \leq {\leqslant}
\def \geq {\geqslant}
\def \Inv {{\rm Inv}}

\let\oldproofname=\proofname
\renewcommand{\proofname}{\rm\bf{\oldproofname}}

\numberwithin{equation}{section}

\allowdisplaybreaks

\begin{document}

\title{On the completion of $\epsilon$-dense partial Latin squares}

\author[a]{Shikang Yu}
\author[a,b]{Tao Feng \thanks{Supported by NSFC under Grant 12271023}}
\affil[a]{School of Mathematics and Statistics, Beijing Jiaotong University, Beijing, 100044, P.R. China}
\affil[b]{Hebei Provincial Key Laboratory of Mathematical Theory and Analysis for Network and Data Science, Beijing Jiaotong University, Beijing, 100044, P.R. China}
\renewcommand*{\Affilfont}{\small\it}
\renewcommand\Authands{ and }

\affil[ ]{healthyu@bjtu.edu.cn, tfeng@bjtu.edu.cn }
\date{}
\maketitle
\underline{}
\begin{abstract}
  A partial Latin square of order $n$ is called $\epsilon$-dense if each row and each column contains at most $\epsilon n$ filled cells, and each symbol occurs at most $\epsilon n$ times. A partial Latin square is said to be completable if its empty cells can be filled to obtain a Latin square. Daykin and H\"{a}ggkvist conjectured that every $\frac{1}{4}$-dense partial Latin square is completable. In this paper, we show that for all sufficiently large integers $n$, every $\frac{2}{25}$-dense partial Latin square of order $n$ is completable. The proof is obtained by establishing that there exists an $\eta > 0$ such that every triangle-divisible balanced tripartite graph on $3n$ vertices with partite minimum degree at least $(\frac{23}{25}-\eta)n$ admits a fractional triangle decomposition.
\end{abstract}

\noindent {\bf Keywords}: partial Latin square; completion; triangle decomposition; tripartite graph

\section{Introduction}\label{sec:introduction}

A \textit{partial Latin square} of order $n$ is an $n\times n$ array such that each cell is either empty or filled with one of $n$ symbols, with no symbol being repeated in any row or column. A partial Latin square in which every cell is filled is a \textit{Latin square}. A partial Latin square is called \textit{completable} if there is some way to fill in all of its empty cells to obtain a Latin square. The problem of characterizing which partial Latin squares are completable is a classical problem in combinatorics and has received extensive attention. Hall \cite{Hall} proved that any partial Latin square of order $n$ in which the first $k$ rows are completely filled and the remaining rows are empty is completable. Ryser \cite{Ryser} established that any partial Latin square of order $n$ whose non-empty cells all lie within a set of $s$ rows and $t$ columns, with $s + t \leq n$, is completable. These studies impose rather strong conditions on the positions of the non-empty cells. In contrast, restricting only the total number of filled cells and imposing no requirements on their positions, Evans \cite{Evans} conjectured that any partial Latin square of order $n$ with at most $n-1$ non-empty cells is completable. This conjecture was later proved independently by Smetianuk \cite{Smetianuk} and by Anderson and Hilton \cite{AH}.

This paper focuses on the problem of determining whether a partial Latin square is completable, given restrictions on both the positions and the number of its filled cells. Let $0 \leq \epsilon \leq 1$. A partial Latin square of order $n$ is called \textit{$\epsilon$-dense} if each row and each column contains at most $\epsilon n$ non-empty cells, and each symbol occurs at most $\epsilon n$ times in the entire array. Daykin and H\"{a}ggkvist \cite{DH} conjectured that every $1/4$-dense partial Latin square is completable. Wanless \cite{Wanless} pointed out that, for any sufficiently small $c>0$, there exists a $(1/4+c)$-dense partial Latin square that is not completable, proving that the constant $1/4$ is tight. Chetwynd and H\"{a}ggkvist \cite{CH} demonstrated that, for any sufficiently large even integer $n$, every $10^{-5}$-dense partial Latin square of order $n$ is completable. Gustavsson \cite{Gustavsson} showed that for all sufficiently large $n$, every $10^{-7}$-dense partial Latin square of order $n$ is completable. Bartlett \cite{Bartlett} established that, for sufficiently large $n$, every $10^{-4}$-dense partial Latin square of order $n$ is completable. The previously best-known result was due to Bowditch and Dukes \cite{BD}, who proved that for sufficiently large $n$, every $1/25$-dense partial Latin square of order $n$ is completable. In this paper, we establish the following result.

\begin{Theorem}\label{thm:main1}
For all sufficiently large integers $n$, every $\frac{2}{25}$-dense partial Latin square of order $n$ is completable.
\end{Theorem}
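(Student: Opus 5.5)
The plan is to follow the Bowditch--Dukes framework. A partial Latin square $P$ of order $n$ corresponds to a set of edge-disjoint triangles in the complete tripartite graph $K_{n,n,n}$ with parts $R$ (rows), $C$ (columns) and $S$ (symbols): a filled cell $(r,c)$ with symbol $s$ is the triangle $\{r,c,s\}$. Completing $P$ is then the same as finding a triangle decomposition of the leftover graph $G=K_{n,n,n}-E(P)$. If $P$ is $\epsilon$-dense, every vertex of $G$ loses at most $\epsilon n$ neighbours in each of the other two parts. Hence $G$ is a balanced tripartite graph with partite minimum degree at least $(1-\epsilon)n$.

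$G$ is also tripartite-balanced in the sense needed for triangle divisibility. Every vertex has the same degree into each of the two other parts, because each removed triangle takes one neighbour from each. So $G$ is triangle-divisible. With $\epsilon=\tfrac{2}{25}$, the minimum degree is at least $\tfrac{23}{25}n$.

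The second step is to invoke the known reduction from fractional to exact decompositions in the tripartite setting, due to Barber, K\"uhn, Lo, Osthus and Taylor, and Montgomery. It says roughly this: if every triangle-divisible balanced tripartite graph on $3n$ vertices with partite minimum degree at least $(\delta^*+\eta')n$ has a fractional triangle decomposition, then for any $\eta>0$ and $n$ large, every triangle-divisible balanced tripartite graph with partite minimum degree at least $(\delta^*+\eta)n$ has a triangle decomposition. Applying this with $\delta^*$ slightly below $\tfrac{23}{25}$ reduces Theorem~\ref{thm:main1} to the fractional statement announced in the abstract. That statement is: there is $\eta>0$ such that every such graph with partite minimum degree at least $(\tfrac{23}{25}-\eta)n$ has a fractional triangle decomposition.

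The main work, and the main obstacle, is proving this fractional result; I would do it via an edge-weighting (gadget) argument. For each edge $e$, consider small configurations of triangles containing $e$: the triangles through $e$, together with $K_4$-like tripartite gadgets built on common neighbourhoods. Start from the uniform weighting that gives each triangle weight proportional to $1/n$. Then correct the deviation of each edge's total weight from $1$ by adding signed combinations of gadgets. Each gadget changes the weight of a single edge while leaving the weights of all other edges unchanged.

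The minimum-degree condition guarantees that every edge lies in at least about $(1-2\epsilon)n$ triangles and in many gadgets. The correction needed at each edge has size $O(\epsilon)$ relative to the base weight. The goal is to show that every triangle's final weight stays nonnegative. This reduces to an inequality comparing the number of triangles and gadgets available at each edge with the total perturbation routed through any single triangle.

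Bowditch--Dukes obtained $\tfrac{24}{25}$ this way. To reach $\tfrac{23}{25}$, I would first try distributing the corrections more evenly: use two-step gadget chains and average over all choices, rather than taking the worst case. The delicate point is optimizing the constants so that the worst-case negative contribution is balanced exactly at $\epsilon=\tfrac{2}{25}$, with slack $\eta$ remaining.
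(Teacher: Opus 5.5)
Your first two steps agree with the paper. $\overline{P}$ is triangle-divisible with $\hat{\delta}(\overline{P})\ge\frac{23}{25}n$, and the Barber--K\"uhn--Lo--Osthus--Taylor reduction (applicable since the fractional threshold exceeds $3/4$) leaves exactly Theorem~\ref{thm:main3}. But that fractional statement is the whole content of the result, and your third step does not prove it.

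\textbf{The gadget you describe does not exist here.} No signed combination of triangles changes the weight of one edge and nothing else. For every $\mathbf{v}\in\mathbb{R}^{\mathcal{T}(G)}$, the vector $A\mathbf{v}$ is balanced at each vertex, e.g.\ $\sum_{y}(A\mathbf{v})_{xy}=\sum_{z}(A\mathbf{v})_{xz}$ (Lemma~\ref{lem:repairable1}). The indicator of a single edge $xy$ violates this at $x$ and at $y$. Single-edge gadgets do exist in $K_5$ for the non-partite problem, which may be the source of the intuition, but not in $K_{X,Y,Z}$. So corrections cannot be made edge by edge. You must check that the error vector is balanced, which is where triangle-divisibility is used in the fractional problem. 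The correction must then be built from a right inverse of the incidence map on the balanced subspace.

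\textbf{No estimate is carried out.} The ``two-step gadget chains'' and the averaging are not defined. Nothing in the sketch produces the constant $\frac{2}{25}$ rather than $\frac{1}{25}$ or any other value.

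For comparison, here is how the paper proceeds.
\begin{enumerate}
\item It starts not from uniform weights, which would leave edge errors as large as $2\gamma$. It uses $\mathbf{a}_T=\frac13\bigl(\frac{1}{t_{xy}}+\frac{1}{t_{xz}}+\frac{1}{t_{yz}}\bigr)$. Then $\mathbf{b}^\star=\mathbf{1}-A\mathbf{a}$ is good, with $M_1(\mathbf{b}^\star),M_2(\mathbf{b}^\star)\le\frac{2\gamma}{3(1-2\gamma)}$ and $M_3(\mathbf{b}^\star)=0$.
\item It applies the explicit operator $\Inv(\mathbf{c})_{xyz}=\frac{\mathbf{c}_{xy}+\mathbf{c}_{xz}+\mathbf{c}_{yz}}{n}-\frac{S_\mathbf{c}(x)+S_\mathbf{c}(y)+S_\mathbf{c}(z)}{n^2}+\frac{S(\mathbf{c})}{n^3}$. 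This is a right inverse of the incidence map of $K_{X,Y,Z}$ on good vectors.
\item Restricting $\Inv(\mathbf{c})$ to $\mathcal{T}(G)$ leaves a new residual. Its value on $xy$ is the sum of $\Inv(\mathbf{c})_{xyz}$ over the at most $2\gamma n$ triples $xyz\notin\mathcal{T}(G)$. The norm $M=2M_1+3M_2+M_3$ of the residual contracts by the factor $7\gamma+6\gamma^2$ per iteration.
\item Summing the series gives $|\mathbf{a}^\star_T|\le\frac1n\le\mathbf{a}_T$, provided $\frac{2\gamma}{3(1-2\gamma)}\bigl(6+\frac{5(9\gamma+6\gamma^2)}{1-7\gamma-6\gamma^2}\bigr)\le 1$. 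This holds for $\gamma\le 0.0805$, i.e.\ slightly beyond $\frac{2}{25}$.
\end{enumerate}
A global, quantitative inversion of this kind on the balanced subspace is what your proposal is missing.
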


We use graph-theoretic models to state and prove Theorem \ref{thm:main1}. All graphs considered here are finite, undirected, and simple. Let $H$ be a graph. Let $V(H)$ and $E(H)$ denote the vertex set and the edge set of $H$, respectively. For $v \in V(H)$ and $U \subseteq V(H)$, let $N(v, U)$ denote the set of neighbors of $v$ in $U$, and let $d(v, U)=|N(v, U)|$.

A \textit{triangle decomposition} of a graph is a partition of its edge set into edge-disjoint triangles. A tripartite graph is called \textit{balanced} if each of its partite sets has the same size. Let $X$, $Y$, and $Z$ denote the row, column, and symbol sets, respectively, with $|X|=|Y|=|Z|=n$. A partial Latin square $P$ can be formally identified with its set of constituent ordered triples: $P = \{ (x, y, z) \in X \times Y \times Z : \text{symbol } z \text{ occupies row } x \text{ and column } y \}$. Each triple $(x, y, z) \in P$ naturally induces a triangle in the complete balanced tripartite graph $K_{X,Y,Z}$. In this sense, a Latin square of order $n$ is equivalent to a triangle decomposition of $K_{X,Y,Z}$, and a partial Latin square $P$ of order $n$ is completable if and only if there exists a triangle decomposition of the spanning subgraph $\overline{P}$ of $K_{X,Y,Z}$ with edge set $E(K_{X,Y,Z}) \setminus E_P$, where $E_P$ denotes the union of the edge sets of the triangles induced by the triples in $P$.

Let $G$ be a balanced tripartite graph with partite sets $X$, $Y$, and $Z$. If $G$ admits a triangle decomposition, then
$$d(x, Y) = d(x, Z), \quad d(y, X) = d(y, Z), \quad \text{and} \quad d(z, X) = d(z, Y)$$
for every $x \in X$, $y \in Y$, and $z \in Z$. We call such a balanced tripartite graph $G$ \textit{triangle-divisible}. The \textit{partite minimum degree} of $G$ is defined as
$$\hat{\delta}(G) := \min\{d(v, W) : v \in X \cup Y \cup Z, \ W \in \{X, Y, Z\}, \ v \notin W\}.$$
Note that if $P$ is an $\epsilon$-dense partial Latin square of order $n$, then the graph $\overline{P}$ is triangle-divisible and satisfies $\hat{\delta}(\overline{P}) \geq (1-\epsilon)n$. Hence, to prove Theorem \ref{thm:main1}, it suffices to prove the following theorem.

\begin{Theorem}\label{thm:main2}
For all sufficiently large integers $n$, every triangle-divisible balanced tripartite graph on $3n$ vertices with partite minimum degree at least $\frac{23}{25}n$ admits a triangle decomposition.
\end{Theorem}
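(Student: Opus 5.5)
We will deduce Theorem \ref{thm:main2} from a fractional decomposition result via the standard ``fractional implies exact'' transfer for balanced tripartite graphs. The tool is the theorem of Montgomery (refining Barber, K\"uhn, Lo, Montgomery and Osthus), which serves as our black box. It says: if every triangle-divisible balanced tripartite graph on $3n$ vertices with partite minimum degree at least $(1-\gamma)n$ has a fractional triangle decomposition, then for every $\eta>0$ and all sufficiently large $n$, every triangle-divisible balanced tripartite graph with $\hat{\delta}(G)\geq (1-\gamma+\eta)n$ has a triangle decomposition. The absorber and iterative-absorption machinery of that framework only needs the fractional threshold, with an $\eta$ loss. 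The abstract already announces the fractional statement with threshold $(\frac{23}{25}-\eta)n$. So the work reduces to proving the following: there is $\eta>0$ such that every triangle-divisible balanced tripartite $G$ on $3n$ vertices with $\hat{\delta}(G)\geq(\frac{23}{25}-\eta)n$ admits a fractional triangle decomposition. Applying the transfer theorem with this $\eta$ then gives Theorem \ref{thm:main2} at exactly $\frac{23}{25}n$.

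For the fractional statement we follow the Bowditch--Dukes strategy and improve its accounting. First take the uniform weighting, putting weight $c$ on every triangle of $G$. For an edge $e$, the resulting load is $c\,t(e)$, where $t(e)$ is the number of triangles through $e$; this is at least $(1-3\gamma)n$ when $\gamma=\frac{2}{25}+\eta$. We choose $c$ so that every edge receives at most $1$, and the deficit vector $1-c\,t(e)$ is then small and nonnegative. Next we correct the deficit edge by edge using local ``gadgets''. Each gadget is a signed combination of triangles on a bounded vertex set, such as the $K_{2,2,2}$-octahedron or a $K_{3,3,3}$-type configuration. Its net effect is to add weight to a chosen edge $e$ and subtract a compensating amount spread over the triangles containing edges incident with $e$'s endpoints. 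Triangle-divisibility is what makes the degree-balance constraints consistent, so that these gadgets can realise an arbitrary deficit vector in the lattice.

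The key quantitative step is to bound the total negative weight any single triangle receives. This requires two things. Each gadget must have many embeddings in $G$, which is a counting argument using $\hat{\delta}\geq(1-\gamma)n$ to lower-bound common neighbourhoods by $(1-2\gamma)n$ and similar quantities. The correction must also be averaged over all embeddings so that no triangle is hit too hard. Nonnegativity of the final weighting reduces to an inequality between $c$ and the averaged correction, and this yields a polynomial condition in $\gamma$. We expect the main obstacle to be this inequality: making it hold up to $\gamma=\frac{2}{25}$ rather than $\frac{1}{25}$ needs a sharper gadget or a two-stage correction. Our first attempt would be to distribute each edge's deficit using gadgets whose apex vertices are chosen in common neighbourhoods, weighted by the exact number of valid completions rather than a worst-case lower bound. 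This exploits the fact that the deficits are themselves controlled by the same degree quantities, so the errors partly cancel. We would then do a careful case optimisation to confirm the resulting inequality holds at $\gamma=\frac{2}{25}+\eta$ for small $\eta$.
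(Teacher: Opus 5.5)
Your reduction is the same as the paper's. You use a fractional-to-exact transfer for balanced tripartite graphs with arbitrarily small degree loss. In the paper this is Barber, K\"uhn, Lo, Osthus and Taylor's Corollary~1.6 rather than the Montgomery papers you name; it needs the fractional threshold to be at least $\frac34$, which holds here. So everything rests on showing that $\hat{\delta}(G)\geq(\frac{23}{25}-\eta)n$ forces a fractional triangle decomposition. That is the entire content of the theorem, and your proposal does not prove it. You never specify the gadget, count its embeddings, or write down the nonnegativity inequality. You also concede that this kind of gadget accounting is what gives $\frac{1}{25}$, and that $\frac{2}{25}$ ``needs a sharper gadget or a two-stage correction''. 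Weighting by exact completion counts in the hope that errors ``partly cancel'', then ``a careful case optimisation to confirm'', is a research plan, not an argument. Nothing in it shows that any triangle's final weight stays nonnegative at $\gamma=\frac{2}{25}+\eta$.

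The missing idea is a global explicit inverse iterated as a Neumann series, in place of local gadgets. For a degree-balanced (``good'') edge vector $\mathbf{c}$, the triple weighting $\Inv(\mathbf{c})_{xyz}=\frac{\mathbf{c}_{xy}+\mathbf{c}_{xz}+\mathbf{c}_{yz}}{n}-\frac{S_{\mathbf{c}}(x)+S_{\mathbf{c}}(y)+S_{\mathbf{c}}(z)}{n^2}+\frac{S(\mathbf{c})}{n^3}$ reproduces $\mathbf{c}$ exactly on $K_{X,Y,Z}$. Keeping only the triples that are triangles of $G$ leaves a good error $\Delta\mathbf{c}$. Each entry of $\Delta\mathbf{c}$ is a sum of $\Inv(\mathbf{c})$ over at most $2\gamma n$ non-triangle triples. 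The paper also uses that $\sum_{y}\Inv(\mathbf{c})_{xyz}=\mathbf{c}_{xz}=0$ when $xz\notin E(G)$, and likewise for $yz$. Let $M_1,M_2,M_3$ be the maximum edge entry, the maximum vertex sum divided by $n$, and the absolute total sum divided by $n^2$, and set $M=2M_1+3M_2+M_3$. One then gets $M_1(\Delta\mathbf{c})\leq 2\gamma M(\mathbf{c})$, $M_2(\Delta\mathbf{c})\leq(\gamma+\gamma^2)M(\mathbf{c})$ and $M_3(\Delta\mathbf{c})\leq 3\gamma^2M(\mathbf{c})$, hence $M(\Delta\mathbf{c})\leq(7\gamma+6\gamma^2)M(\mathbf{c})$. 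The paper starts from the non-uniform weighting $\mathbf{a}_T=\frac13\sum_{e\in T}1/t_e\geq\frac1n$. Its deficit $\mathbf{1}-A\mathbf{a}$ has total sum $0$ and $M_1,M_2\leq\frac{2\gamma}{3(1-2\gamma)}$. The summed corrections then have absolute value at most $\frac{2\gamma}{3(1-2\gamma)}\left(6+\frac{5(9\gamma+6\gamma^2)}{1-7\gamma-6\gamma^2}\right)\frac1n\leq\frac1n$ on every triangle for $\gamma\leq 0.0805$, which covers $\gamma=\frac{2}{25}+\eta$. Your outline lacks a mechanism of this kind: a proven contraction factor together with an explicit final inequality.
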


Note that an $\epsilon$-dense partial Latin square of order $n$ corresponds to a triangle-divisible balanced tripartite graph on $3n$ vertices with partite minimum degree at least $(1-\epsilon)n$, but the converse does not necessarily hold: not every such graph is the complement of a set of edge-disjoint triangles in a complete balanced tripartite graph. Hence, Theorem~\ref{thm:main2} is strictly stronger than Theorem~\ref{thm:main1}.

A \textit{fractional triangle decomposition} of $G$ is an assignment of nonnegative real weights to the triangles of $G$ such that, for each edge of $G$, the sum of the weights of all triangles containing that edge is exactly $1$. A triangle decomposition can thus be viewed as a fractional triangle decomposition in which each assigned weight is either $0$ or $1$. Barber, K\"{u}hn, Lo, Osthus, and Taylor \cite[Corollary 1.6]{BKLOT} employed the iterative absorption approach, together with a result from \cite{HR}, to establish that for sufficiently large $n$, if every triangle-divisible balanced tripartite graph on $3n$ vertices with partite minimum degree at least $cn$ admits a fractional triangle decomposition, then every such graph with partite minimum degree at least $c'n$ admits a triangle decomposition, provided that $c' > c \geq 3/4$. Therefore, to prove Theorem \ref{thm:main2}, it suffices to prove the following theorem.

\begin{Theorem}\label{thm:main3}
There exists an $\eta > 0$ such that every triangle-divisible balanced tripartite graph on $3n$ vertices with partite minimum degree at least $(\frac{23}{25}-\eta)n$ admits a fractional triangle decomposition.
\end{Theorem}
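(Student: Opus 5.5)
We follow the gadget method of Bowditch and Dukes, refining the edge-gadget weighting to get from $\frac{24}{25}$ to $\frac{23}{25}$. Write $\hat\delta(G)\geq (1-\epsilon)n$ with $\epsilon=\frac{2}{25}+\eta$. We start from the uniform weighting $\omega_0$, which places weight $1/\tau$ on every triangle of $G$, where $\tau$ is a normalising constant of order $n$; for instance the average number of triangles on an edge. Under $\omega_0$, the total weight on an edge $e$ is $t(e)/\tau$, where $t(e)$ is the number of triangles through $e$. The minimum degree condition gives $t(e)\geq (1-2\epsilon)n$ and $t(e)\leq n$. So the error vector $\partial(e)=1-t(e)/\tau$ satisfies $|\partial(e)|\leq c\epsilon$ for an explicit constant $c$. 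Because $G$ is triangle-divisible, the error vector can be made to satisfy the same divisibility (balancing) conditions at each vertex.

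The key step is to correct this error with signed gadgets. For each edge $e=xy$ (say $x\in X$, $y\in Y$) we build a signed combination $\phi_e$ of triangles whose boundary is exactly the indicator of $e$. The natural choice is the standard ``octahedron'' gadget. Pick $z\in Z$, $x'\in X$ and $y'\in Y$ so that all the required edges exist. The alternating sum of triangles in the $K_{2,2,2}$ on $\{x,x'\}\times\{y,y'\}\times\{z,\cdot\}$ cancels every edge except a prescribed pattern. Combining such patterns and using vertex-balance gives a combination isolating $xy$ up to terms that cancel globally. We then average over all valid choices of the auxiliary vertices, dividing by the number of choices, which is at least $((1-k\epsilon)n)^{m}$ for the appropriate $k$ and number $m$ of auxiliary vertices. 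This averaging ensures that each triangle receives correction of size $O(1/n)$ times the sum of the $|\partial(e)|$ involved.

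Finally, set $\omega=\omega_0+\sum_e \partial(e)\phi_e$. By construction its boundary is identically $1$. Nonnegativity of $\omega$ reduces to the inequality $1/\tau \geq \sum_e |\partial(e)|\,|\phi_e(T)|$ for every triangle $T$. We bound the right-hand side by counting, for a fixed triangle $T$, the edges $e$ whose averaged gadget uses $T$, together with the corresponding coefficients. This yields an inequality of the form $c\epsilon\cdot g(\epsilon)\leq 1$, where $g$ comes from the ratio of total gadget count to the number of valid gadgets through $T$. Tuning the gadget, by averaging over more auxiliary vertices and exploiting the degree-equality conditions to cancel first-order terms, we expect this to hold exactly when $\epsilon<\frac{2}{25}$. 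The slack then gives the room for $\eta$.

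The main obstacle is this last inequality. A naive count reproduces only the $\frac{1}{25}$ bound of Bowditch and Dukes. To improve it I would first try two things. One is to replace $\omega_0$ by a better initial weighting that already corrects the degree-dependent first-order error: for example, weight each triangle $xyz$ by a function of $d(x,Y)$, $d(y,Z)$ and $d(z,X)$, so that the residual $\partial$ is quadratic in the deficiencies. The other is to bound $\sum_e|\partial(e)|$ on average rather than pointwise. If that still falls short, I would optimise the gadget weights via a small linear program in the deficiency parameters, expecting the extremal value $\frac{23}{25}$ to appear as its threshold.
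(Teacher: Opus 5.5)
Your proposal has a genuine gap. The theorem is entirely about the constant $\frac{2}{25}$, and the only place it could come from, the final inequality $c\epsilon\, g(\epsilon)\le 1$, is never derived. You concede that your count gives only Bowditch--Dukes' $\frac{1}{25}$. You then list three things you would try while ``expecting'' $\frac{2}{25}$ to emerge, but none is carried out. Note also that a threshold of \emph{exactly} $\epsilon<\frac{2}{25}$ would leave no room for $\eta$: you need the inequality at some $\epsilon$ strictly above $\frac{2}{25}$.

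The construction step is also unsound as written. No signed combination $\phi_e$ of triangles can have boundary exactly $\mathbf{1}_{xy}$. By Lemma~\ref{lem:repairable1}, every $A\mathbf{v}$ satisfies $\sum_{y}(A\mathbf{v})_{xy}=\sum_{z}(A\mathbf{v})_{xz}$ at each $x$, and $\mathbf{1}_{xy}$ violates this at $x$; the signed octahedron you start from even has zero boundary. So the claim that $\omega_0+\sum_e\partial(e)\phi_e$ has boundary identically $1$ ``by construction'' rests on the unspecified phrase ``up to terms that cancel globally''. The correction must be an operator on balanced error vectors that genuinely uses their vertex balance, and that is where the real work lies. (The balance of $\partial$ itself needs no arranging: it follows automatically from Lemma~\ref{lem:repairable1} and triangle-divisibility.)

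The missing idea, as the paper does it, is an iteration rather than a single gadget correction. The paper starts from $\mathbf{a}_T=\frac13\bigl(\frac1{t_{xy}}+\frac1{t_{xz}}+\frac1{t_{yz}}\bigr)$, built from edge codegrees rather than vertex degrees. This weighting satisfies $\mathbf{a}_T\ge\frac1n$, and its error $\mathbf{b}^\star=\mathbf{1}-A\mathbf{a}$ has $S(\mathbf{b}^\star)=0$ and $M_1,M_2\le\frac{2\gamma}{3(1-2\gamma)}$. It then corrects with the explicit operator $\Inv$, which exactly inverts the boundary map of the complete graph $K_{X,Y,Z}$ on good vectors (Lemma~\ref{lem:Inv}), restricted to $\mathcal{T}(G)$. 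The new residual $\Delta\mathbf{c}$ is a sum of $\Inv(\mathbf{c})$ over non-triangle triples only (Lemma~\ref{lem:DeltaCxy}). Using $\sum_y\Inv(\mathbf{c})_{xyz}=\mathbf{c}_{xz}=0$ on non-edges to cancel first-order terms gives the contraction $M(\Delta\mathbf{c})\le(7\gamma+6\gamma^2)M(\mathbf{c})$. Summing the geometric series bounds the total correction on each triangle by $\frac{2\gamma}{3(1-2\gamma)}\bigl(6+\frac{5(9\gamma+6\gamma^2)}{1-7\gamma-6\gamma^2}\bigr)\cdot\frac1n$. This is at most $\frac1n$ for all $\gamma\le 0.0805$, and since $0.0805>\frac{2}{25}$ this is what yields $\eta>0$. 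Because the correction uses the complete-graph inverse and pays only for missing triples, no normalisation by the number of valid gadgets in $G$ ever enters. That normalisation is exactly the ratio your function $g$ is built from, and it is what you would have to beat.
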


\section{Proof of Theorem~\ref{thm:main3}}\label{sec:outline}

First, we fix some notation and basic facts that will be used frequently later. 

Let $G$ be a triangle-divisible balanced tripartite graph with partite sets $X, Y$, and $Z$ satisfying $$\hat{\delta}(G) \geq (1-\gamma)n,$$ where $|X|=|Y|=|Z|=n$ and $0 \leq \gamma \leq 1/4$. For disjoint subsets $U_1,U_2\subseteq V(G)$, let $E(U_1,U_2)$ denote the set of edges in $G$ with one vertex in $U_1$ and the other in $U_2$. Let $\mathcal{T}(G)$ denote the set of all triangles in $G$. For any $e\in E(G)$, let $$t_e := | \{ T \in \mathcal{T}(G) : e\in T \} |.$$ Observe that $t_e$ is exactly the number of common neighbors of the endpoints of $e$ in the third part. Combining the degree condition with the inclusion-exclusion principle yields $$(1-2\gamma)n \leq t_e \leq n.$$ Since $G$ is triangle-divisible, $|E(X, Y)| = |E(X, Z)| = |E(Y, Z)|$. Throughout, we use $x$, $y$, and $z$ to denote vertices in $X$, $Y$, and $Z$, respectively.

Let $\mathbb{R}^{E(G)}$ and $\mathbb{R}^{\mathcal{T}(G)}$ be the real vector spaces whose coordinates are indexed by $E(G)$ and $\mathcal{T}(G)$, respectively. For any $\mathbf{v} \in \mathbb{R}^{\mathcal{T}(G)}$, we write $\mathbf{v} \geq 0$ to indicate that all entries of $\mathbf{v}$ are non-negative. Let $\mathbf{1} \in \mathbb{R}^{E(G)}$ denote the all-ones vector. Let $A$ be the matrix whose rows are indexed by edges $e \in E(G)$ and columns by triangles $T \in \mathcal{T}(G)$, where the $(e,T)$-entry is $1$ if $e \in T$ and $0$ otherwise.

Define the vector $\mathbf{a}\in \mathbb{R}^{\mathcal{T}(G)}$ as follows: for each triangle $T = xyz \in \mathcal{T}(G)$,
$$ \mathbf{a}_T := \frac{1}{3} \left( \frac{1}{t_{xy}} + \frac{1}{t_{xz}} + \frac{1}{t_{yz}} \right). $$
Let $\mathbf{b}: = A \mathbf{a}  \in \mathbb{R}^{E(G)}$, and let $\mathbf{b}^\star: = \mathbf{1} - \mathbf{b} \in \mathbb{R}^{E(G)}$. We then have the following lemma.

\begin{Lemma}\label{lem:frac1}
If there exists a vector $\mathbf{a}^\star \in \mathbb{R}^{\mathcal{T}(G)}$ such that $\mathbf{a} + \mathbf{a}^\star \geq 0 $ and $A \mathbf{a}^\star  = \mathbf{b}^\star$, then the graph $G$ admits a fractional triangle decomposition.
\end{Lemma}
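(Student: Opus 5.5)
The plan is to verify directly that the vector $\mathbf{w} := \mathbf{a} + \mathbf{a}^\star \in \mathbb{R}^{\mathcal{T}(G)}$ is itself a fractional triangle decomposition of $G$. By definition, a fractional triangle decomposition is exactly a vector $\mathbf{w} \in \mathbb{R}^{\mathcal{T}(G)}$ with $\mathbf{w} \geq 0$ and, for every edge $e$, $\sum_{T \ni e} \mathbf{w}_T = 1$. In the matrix language this second condition reads $A\mathbf{w} = \mathbf{1}$, since the $e$-th coordinate of $A\mathbf{w}$ is precisely the sum of $\mathbf{w}_T$ over triangles $T$ containing $e$. So the whole argument reduces to checking these two properties for $\mathbf{w}$.

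Non-negativity is simply the hypothesis $\mathbf{a} + \mathbf{a}^\star \geq 0$. For the edge condition we use linearity of $A$:
$$A\mathbf{w} = A\mathbf{a} + A\mathbf{a}^\star = \mathbf{b} + \mathbf{b}^\star = \mathbf{b} + (\mathbf{1} - \mathbf{b}) = \mathbf{1},$$
where we used the definitions $\mathbf{b} = A\mathbf{a}$ and $\mathbf{b}^\star = \mathbf{1} - \mathbf{b}$ together with the hypothesis $A\mathbf{a}^\star = \mathbf{b}^\star$. Hence every edge of $G$ is covered with total weight exactly $1$, and $\mathbf{w}$ assigns nonnegative real weights to triangles. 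This is a fractional triangle decomposition.

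There is no genuine obstacle here; the only point needing care is the well-definedness of $\mathbf{a}$. Its entries involve $1/t_e$, and these are finite because $t_e \geq (1-2\gamma)n > 0$ for $\gamma \leq 1/4$ (and $n \geq 1$). The real difficulty, constructing $\mathbf{a}^\star$ with small enough entries that $\mathbf{a} + \mathbf{a}^\star \geq 0$ under the degree condition, is deferred to the rest of the paper and is not part of this lemma.
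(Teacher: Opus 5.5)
Your proof is correct and is essentially the paper's argument: take the weights $\mathbf{a}+\mathbf{a}^\star$, which are nonnegative by hypothesis, and check by linearity that $A(\mathbf{a}+\mathbf{a}^\star)=\mathbf{b}+(\mathbf{1}-\mathbf{b})=\mathbf{1}$. Your side remark that $\mathbf{a}$ is well defined because $t_e \geq (1-2\gamma)n>0$ is a harmless addition that the paper leaves implicit.
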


\begin{proof}
Since $A(\mathbf{a} + \mathbf{a}^\star) = A \mathbf{a} + A \mathbf{a}^\star = \mathbf{b} + \mathbf{1} - \mathbf{b} = \mathbf{1} \in \mathbb{R}^{E(G)}$, the weight function $\omega : \mathcal{T}(G) \to \mathbb{R}$ defined by $\omega(T) = (\mathbf{a} + \mathbf{a}^\star)_T$ for each $T \in \mathcal{T}(G)$ yields a fractional triangle decomposition of $G$.
\end{proof}

The remainder of this paper is devoted to proving that, under the condition $\gamma = 2/25+\eta$ for some $\eta>0$, there exists a vector $\mathbf{a}^\star \in \mathbb{R}^{\mathcal{T}(G)}$ satisfying the conditions of Lemma~\ref{lem:frac1}.

\subsection{Good vectors in $\mathbb{R}^{E(G)}$}\label{subsec:repairable}

Since $G$ is triangle-divisible, we have the degree equalities $d(x, Y) = d(x, Z)$, $d(y, X) = d(y, Z)$,  and $d(z, X) = d(z, Y)$ for all $x \in X$, $y \in Y$, and $z \in Z$. This motivates us to introduce a special class of vectors in $\mathbb{R}^{E(G)}$, called good vectors, which will be used in Section~\ref{subsec:Iterative} to construct a vector $\mathbf{a}^\star \in \mathbb{R}^{\mathcal{T}(G)}$ satisfying the conditions in Lemma~\ref{lem:frac1}.

For a vector $\mathbf{c} \in \mathbb{R}^{E(G)}$, by abuse of notation,  we regard $\mathbf{c}$ as a vector in $\mathbb{R}^{E(K_{X,Y,Z})}$ whenever necessary, by taking its entries to be 0 outside $E(G)$.

We say that $\mathbf{c}\in \mathbb{R}^{E(G)}$ is \textit{good} if it satisfies the following equations:
$$ \sum_{y \in Y} \mathbf{c}_{xy} = \sum_{z \in Z} \mathbf{c}_{xz} \quad \text{for all } x \in X, $$
$$ \sum_{x \in X} \mathbf{c}_{xy} = \sum_{z \in Z} \mathbf{c}_{yz} \quad \text{for all } y \in Y, $$
and
$$ \sum_{x \in X} \mathbf{c}_{xz} = \sum_{y \in Y} \mathbf{c}_{yz} \quad \text{for all } z \in Z. $$

\begin{Lemma}\label{lem:repairable1}
For any vector $\mathbf{v} \in \mathbb{R}^{\mathcal{T}(G)}$, the vector $A\mathbf{v} \in \mathbb{R}^{E(G)}$ is good.
\end{Lemma}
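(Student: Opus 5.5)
The plan is to use linearity. The map $\mathbf{v}\mapsto A\mathbf{v}$ is linear, and each of the defining equations of a good vector is a homogeneous linear equation in the entries of $\mathbf{c}$. Hence the set of good vectors is a linear subspace of $\mathbb{R}^{E(G)}$. It therefore suffices to check that $A\mathbf{e}_T$ is good for every triangle $T\in\mathcal{T}(G)$, where $\mathbf{e}_T$ denotes the standard basis vector. Writing $\mathbf{v}=\sum_{T}\mathbf{v}_T\mathbf{e}_T$ then gives the general case.

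Fix $T=x_0y_0z_0$. The vector $A\mathbf{e}_T$ is the column of $A$ indexed by $T$, namely the indicator vector of the three edges $x_0y_0$, $x_0z_0$, $y_0z_0$. We check the first family of equations. For $x\in X$ with $x\neq x_0$, both $\sum_{y\in Y}(A\mathbf{e}_T)_{xy}$ and $\sum_{z\in Z}(A\mathbf{e}_T)_{xz}$ vanish, since no edge of $T$ meets $x$. For $x=x_0$, each sum equals $1$: the only edge of $T$ from $x_0$ to $Y$ is $x_0y_0$, and the only one to $Z$ is $x_0z_0$.

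The equations for $y\in Y$ and for $z\in Z$ follow by the same argument with the roles of $X,Y,Z$ permuted. The underlying fact is that every triangle of the tripartite graph meets each vertex of $T$ in exactly one edge to each of the other two parts, and meets no other vertex.

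No step is a real obstacle. The only point needing care is the convention that entries of $\mathbf{c}$ outside $E(G)$ are $0$, so sums over all of $Y$ or $Z$ agree with sums over neighbours in $G$. Alternatively, one can write $(A\mathbf{v})_{xy}=\sum_{z}\mathbf{v}_{xyz}$ directly. Then $\sum_{y}(A\mathbf{v})_{xy}$ and $\sum_{z}(A\mathbf{v})_{xz}$ both equal the sum of $\mathbf{v}_T$ over all triangles $T$ containing $x$, by double counting, and similarly for $y$ and $z$.
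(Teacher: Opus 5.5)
Your proof is correct and is essentially the paper's argument. The paper carries out exactly your alternative double count, $\sum_{y\in Y}(A\mathbf{v})_{xy}=\sum_{T\ni x}\mathbf{v}_T=\sum_{z\in Z}(A\mathbf{v})_{xz}$; your linearity reduction to the columns $A\mathbf{e}_T$ is the same observation (each triangle through $x$ has exactly one edge from $x$ to $Y$ and one to $Z$) applied one triangle at a time.
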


\begin{proof}
Without loss of generality, fix an arbitrary vertex $x\in X$. Then
$$ \sum_{y \in Y} (A\mathbf{v})_{xy} = \sum_{y \in Y} \sum_{T \in \mathcal{T}(G) : xy \in T} \mathbf{v}_T = \sum_{T \in \mathcal{T}(G) : x \in V(T)} \mathbf{v}_T = \sum_{z \in Z} \sum_{T \in \mathcal{T}(G) : xz \in T} \mathbf{v}_T = \sum_{z \in Z} (A\mathbf{v})_{xz}. $$
Thus, the vector $A\mathbf{v}$ is good.
\end{proof}

\begin{Lemma}\label{lem:repairable2}
If $\mathbf{c}_1, \mathbf{c}_2 \in \mathbb{R}^{E(G)}$ are good, then $\mathbf{c}_1 - \mathbf{c}_2$ is also good.
\end{Lemma}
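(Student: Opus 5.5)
The plan is to use the observation that the three families of defining equations for a good vector are all homogeneous linear equations in the coordinates of $\mathbf{c}$. The set of good vectors is therefore the kernel of a linear map on $\mathbb{R}^{E(G)}$, hence a linear subspace, and in particular it is closed under differences.

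Concretely, I would fix an arbitrary $x \in X$ and use linearity of finite sums:
$$\sum_{y \in Y} (\mathbf{c}_1 - \mathbf{c}_2)_{xy} = \sum_{y \in Y} (\mathbf{c}_1)_{xy} - \sum_{y \in Y} (\mathbf{c}_2)_{xy} = \sum_{z \in Z} (\mathbf{c}_1)_{xz} - \sum_{z \in Z} (\mathbf{c}_2)_{xz} = \sum_{z \in Z} (\mathbf{c}_1 - \mathbf{c}_2)_{xz},$$
where the middle equality applies the defining equation at $x$ for $\mathbf{c}_1$ and for $\mathbf{c}_2$ separately. The equations at vertices $y \in Y$ and $z \in Z$ follow by the identical computation with the roles of the parts permuted. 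So, as in the proof of Lemma~\ref{lem:repairable1}, it suffices to treat one vertex type "without loss of generality".

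The one point needing a remark is the convention of extending vectors by $0$ outside $E(G)$ when viewing them in $\mathbb{R}^{E(K_{X,Y,Z})}$. Subtraction commutes with this extension, since $0 - 0 = 0$ on non-edges, so the sums may be read over all of $Y$ (respectively $Z$) without ambiguity.

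There is no real obstacle here. The statement is immediate linear algebra, and the same argument shows that any linear combination of good vectors is good.
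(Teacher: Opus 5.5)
Your proposal is correct and matches the paper's proof. The paper fixes $x \in X$ and writes exactly the same chain of equalities, applying the defining equation at $x$ to $\mathbf{c}_1$ and $\mathbf{c}_2$ separately, and handles the other vertex types without loss of generality.
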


\begin{proof}
Without loss of generality, fix an arbitrary vertex $x\in X$. Since $\mathbf{c}_1$ and $\mathbf{c}_2$ are good, we have
$$ \sum_{y \in Y} (\mathbf{c}_1 - \mathbf{c}_2)_{xy} = \sum_{y \in Y} (\mathbf{c}_1)_{xy} - \sum_{y \in Y} (\mathbf{c}_2)_{xy} = \sum_{z \in Z} (\mathbf{c}_1)_{xz} - \sum_{z \in Z} (\mathbf{c}_2)_{xz} = \sum_{z \in Z} (\mathbf{c}_1 - \mathbf{c}_2)_{xz}. $$
Thus, $\mathbf{c}_1 - \mathbf{c}_2$ is good.
\end{proof}

\begin{Lemma}\label{lem:repairable3}
The vector $\mathbf{b}^\star \in \mathbb{R}^{E(G)}$ is good.
\end{Lemma}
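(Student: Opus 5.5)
We will write $\mathbf{b}^\star = \mathbf{1} - \mathbf{b} = \mathbf{1} - A\mathbf{a}$ and reduce everything to the earlier lemmas. By Lemma~\ref{lem:repairable1}, the vector $\mathbf{b} = A\mathbf{a}$ is good, since $\mathbf{a} \in \mathbb{R}^{\mathcal{T}(G)}$. By Lemma~\ref{lem:repairable2}, the difference of two good vectors is good. It therefore suffices to show that the all-ones vector $\mathbf{1} \in \mathbb{R}^{E(G)}$ is good.

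To check that $\mathbf{1}$ is good, we use the convention that $\mathbf{c}$ is regarded as a vector on $E(K_{X,Y,Z})$ with zero entries outside $E(G)$. Fix $x \in X$. Then
$$\sum_{y\in Y}\mathbf{1}_{xy} = d(x,Y) \quad\text{and}\quad \sum_{z\in Z}\mathbf{1}_{xz} = d(x,Z),$$
and these are equal because $G$ is triangle-divisible. The identities for $y \in Y$ and $z \in Z$ follow in the same way from $d(y,X)=d(y,Z)$ and $d(z,X)=d(z,Y)$.

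Combining these steps, $\mathbf{b}^\star = \mathbf{1} - \mathbf{b}$ is the difference of two good vectors and is therefore good by Lemma~\ref{lem:repairable2}. The only point needing care is the zero-extension convention, which makes the sums over all of $Y$ or $Z$ count exactly the degrees into those parts; the argument has no real obstacle.
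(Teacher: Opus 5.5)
Your proposal is correct and follows essentially the same route as the paper. Both arguments show that $\mathbf{b}=A\mathbf{a}$ is good by Lemma~\ref{lem:repairable1} and that $\mathbf{1}$ is good by triangle-divisibility, then conclude with Lemma~\ref{lem:repairable2}.
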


\begin{proof}
Recall that $\mathbf{b}^\star = \mathbf{1} - \mathbf{b}$. By Lemma \ref{lem:repairable1}, $\mathbf{b} = A \mathbf{a}$ is good. Thus, by Lemma \ref{lem:repairable2}, it suffices to show that the all-ones vector $\mathbf{1} \in \mathbb{R}^{E(G)}$ is good. Without loss of generality, fix an arbitrary vertex $x \in X$. Since $G$ is triangle-divisible, we have
$$ \sum_{y \in Y} \mathbf{1}_{xy} = d(x, Y) = d(x, Z) = \sum_{z \in Z} \mathbf{1}_{xz},$$
which yields that $\mathbf{1} \in \mathbb{R}^{E(G)}$ is good.
\end{proof}

\subsection{Constructing $\mathbf{a}^\star$ from the good vector $\mathbf{b}^\star$} \label{subsec:Iterative}

By Lemma \ref{lem:repairable3}, the vector $\mathbf{b}^\star$ is good. In this section, we construct a vector $\mathbf{a}^\star \in \mathbb{R}^{\mathcal{T}(G)}$, based on $\mathbf{b}^\star$, such that $\mathbf{a}^\star$ satisfies the conditions in Lemma~\ref{lem:frac1}. Before that, we need to define a vector $\widetilde{\Inv}(\mathbf{c}) \in \mathbb{R}^{\mathcal{T}(G)}$ for each good vector $\mathbf{c} \in \mathbb{R}^{E(G)}$.

For a good vector $\mathbf{c} \in \mathbb{R}^{E(G)}$, it immediately follows that
$$ \sum_{x \in X} \sum_{y \in Y} \mathbf{c}_{xy} =  \sum_{x \in X} \sum_{z \in Z} \mathbf{c}_{xz} = \sum_{y \in Y} \sum_{z \in Z} \mathbf{c}_{yz}. $$
Let
$$ S_\mathbf{c}(x) := \sum_{y \in Y} \mathbf{c}_{xy} = \sum_{z \in Z} \mathbf{c}_{xz} \quad \text{for all } x \in X, $$
$$ S_\mathbf{c}(y) := \sum_{x \in X} \mathbf{c}_{xy} = \sum_{z \in Z} \mathbf{c}_{yz} \quad \text{for all } y \in Y, $$
$$ S_\mathbf{c}(z) := \sum_{x \in X} \mathbf{c}_{xz} = \sum_{y \in Y} \mathbf{c}_{yz} \quad \text{for all } z \in Z, $$
and let
$$ S(\mathbf{c}) :=  \sum_{x \in X} \sum_{y \in Y} \mathbf{c}_{xy} =  \sum_{x \in X} \sum_{z \in Z} \mathbf{c}_{xz} = \sum_{y \in Y} \sum_{z \in Z} \mathbf{c}_{yz}. $$
Define a vector $\Inv(\mathbf{c}) \in \mathbb{R}^{X \times Y \times Z}$, where $\mathbb{R}^{X \times Y \times Z}$ is the real vector space with coordinates indexed by $X \times Y \times Z$, by
$$ \Inv(\mathbf{c})_{xyz} := \frac{\mathbf{c}_{xy} + \mathbf{c}_{xz} + \mathbf{c}_{yz}}{n} - \frac{S_\mathbf{c}(x) + S_\mathbf{c}(y) + S_\mathbf{c}(z)}{n^2} + \frac{S(\mathbf{c})}{n^3}.$$
We further define a vector $\widetilde{\Inv}(\mathbf{c}) \in \mathbb{R}^{\mathcal{T}(G)}$ by
$$ \widetilde{\Inv}(\mathbf{c})_T := \Inv(\mathbf{c})_{xyz} \quad \text{for each } T = xyz \in \mathcal{T}(G). $$

The following lemma characterizes the properties of $\Inv(\mathbf{c})$, and will be used frequently in Section \ref{sec:estimation}.

\begin{Lemma}\label{lem:Inv}
If $\mathbf{c} \in \mathbb{R}^{E(G)}$ is good, then
$$ \sum_{z \in Z } \Inv(\mathbf{c})_{xyz} = \mathbf{c}_{xy} \quad \text{for all } xy \in X\times Y, $$
$$ \sum_{y \in Y } \Inv(\mathbf{c})_{xyz} = \mathbf{c}_{xz} \quad \text{for all } xz \in X\times Z, $$
and
$$ \sum_{x \in X } \Inv(\mathbf{c})_{xyz} = \mathbf{c}_{yz} \quad \text{for all } yz \in Y\times Z. $$
\end{Lemma}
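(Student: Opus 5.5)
The proof is a direct computation. By the symmetry of the definition of $\Inv(\mathbf{c})$ under permuting the roles of $X$, $Y$, $Z$, and since the definition of a good vector is symmetric in the same way, it suffices to verify the first identity. Fix $x\in X$ and $y\in Y$, and sum the defining formula of $\Inv(\mathbf{c})_{xyz}$ over $z\in Z$, treating each of the three terms separately. Throughout, $\mathbf{c}$ is regarded as a vector on $E(K_{X,Y,Z})$ with zeros outside $E(G)$. This convention is harmless, because the sums $S_\mathbf{c}(\cdot)$ and $S(\mathbf{c})$ over all of $X$, $Y$ or $Z$ are unaffected by the added zeros.

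For the first term, $\mathbf{c}_{xy}$ does not depend on $z$, so it contributes $n\cdot \mathbf{c}_{xy}/n=\mathbf{c}_{xy}$. The remaining part of the first term gives
$$\frac{1}{n}\sum_{z\in Z}(\mathbf{c}_{xz}+\mathbf{c}_{yz})=\frac{S_\mathbf{c}(x)+S_\mathbf{c}(y)}{n},$$
where we use the goodness identities $S_\mathbf{c}(x)=\sum_{z}\mathbf{c}_{xz}$ and $S_\mathbf{c}(y)=\sum_z\mathbf{c}_{yz}$. For the second term, $S_\mathbf{c}(x)+S_\mathbf{c}(y)$ is independent of $z$, and $\sum_z S_\mathbf{c}(z)=\sum_z\sum_x \mathbf{c}_{xz}=S(\mathbf{c})$. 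So the second term contributes
$$-\frac{n(S_\mathbf{c}(x)+S_\mathbf{c}(y))+S(\mathbf{c})}{n^2}=-\frac{S_\mathbf{c}(x)+S_\mathbf{c}(y)}{n}-\frac{S(\mathbf{c})}{n^2}.$$
The third term is constant in $z$, so it contributes $n\cdot S(\mathbf{c})/n^3=S(\mathbf{c})/n^2$.

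Adding the three contributions, the $(S_\mathbf{c}(x)+S_\mathbf{c}(y))/n$ terms cancel and the $S(\mathbf{c})/n^2$ terms cancel, leaving exactly $\mathbf{c}_{xy}$. There is no real obstacle here. The only points needing care are that goodness is what lets us identify $\sum_z \mathbf{c}_{xz}$ with $S_\mathbf{c}(x)$ and $\sum_z \mathbf{c}_{yz}$ with $S_\mathbf{c}(y)$, and that $|Z|=n$ is used for the constant terms. The other two identities follow by the same computation with the roles of the parts permuted.
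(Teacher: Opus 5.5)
Your proposal is correct and follows the paper's proof: sum the defining formula over $z\in Z$, use goodness to identify $\sum_{z}\mathbf{c}_{xz}=S_\mathbf{c}(x)$, $\sum_z\mathbf{c}_{yz}=S_\mathbf{c}(y)$ and $\sum_z S_\mathbf{c}(z)=S(\mathbf{c})$, and watch the terms cancel. The other two identities then follow by the symmetry of the definitions in $X$, $Y$, $Z$, which is exactly the paper's ``without loss of generality.''
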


\begin{proof}
Without loss of generality, fix an arbitrary pair $xy \in X \times Y$. It suffices to prove that $\sum_{z \in Z } \Inv(\mathbf{c})_{xyz} = \mathbf{c}_{xy}$. Since $\mathbf{c}$ is good, we have
$$ \sum_{z \in Z} \mathbf{c}_{xz} = S_\mathbf{c}(x), \quad \sum_{z \in Z} \mathbf{c}_{yz} = S_\mathbf{c}(y), \quad  \text{and} \quad  \sum_{z \in Z} S_\mathbf{c}(z) = S(\mathbf{c}). $$
Therefore,
\begin{align*}
\sum_{z \in Z} \Inv(\mathbf{c})_{xyz}
&= \sum_{z \in Z} \left(  \frac{\mathbf{c}_{xy} + \mathbf{c}_{xz} + \mathbf{c}_{yz}}{n} - \frac{S_\mathbf{c}(x) + S_\mathbf{c}(y) + S_\mathbf{c}(z)}{n^2} + \frac{S(\mathbf{c})}{n^3} \right) \\
&= \frac{1}{n} \left( \sum_{z \in Z} \mathbf{c}_{xy} + \sum_{z \in Z} \mathbf{c}_{xz} + \sum_{z \in Z} \mathbf{c}_{yz} \right) \\
&\quad - \frac{1}{n^2} \left( \sum_{z \in Z} S_\mathbf{c}(x) + \sum_{z \in Z} S_\mathbf{c}(y) + \sum_{z \in Z} S_\mathbf{c}(z) \right) + \frac{1}{n^3} \sum_{z \in Z} S(\mathbf{c}) \\
&= \frac{1}{n} \left( n \mathbf{c}_{xy} + S_\mathbf{c}(x) + S_\mathbf{c}(y) \right) - \frac{1}{n^2} \left( n S_\mathbf{c}(x) + n S_\mathbf{c}(y) + S(\mathbf{c}) \right) + \frac{S(\mathbf{c})}{n^2} = \mathbf{c}_{xy}.
\end{align*}
This completes the proof.
\end{proof}

Starting with the good vector $\mathbf{b}^{(0)} := \mathbf{b}^\star \in \mathbb{R}^{E(G)}$, we recursively define a sequence $\{ \mathbf{b}^{(k)} \}_{k=0}^{\infty}$ of good vectors in $\mathbb{R}^{E(G)}$. For each $k \ge 0$, we set
$$ \mathbf{b}^{(k+1)} := \mathbf{b}^{(k)} - A \widetilde{\Inv}(\mathbf{b}^{(k)}). $$
It follows from Lemmas \ref{lem:repairable1}, \ref{lem:repairable2}, and \ref{lem:repairable3} that for all $k\geq 0$, $\mathbf{b}^{(k)}$ is good, and hence $\Inv(\mathbf{b}^{(k)})$ is well-defined. We then have the following lemma.

\begin{Lemma}\label{lem:frac2}
Suppose that the series $ \sum_{k=0}^{\infty} \widetilde{\Inv}(\mathbf{b}^{(k)})$ converges absolutely component-wise. Let
$$\mathbf{a}^\star = \sum_{k=0}^{\infty} \widetilde{\Inv}(\mathbf{b}^{(k)}) \in \mathbb{R}^{\mathcal{T}(G)}.$$
If $\max \left\{ \left| \mathbf{a}^\star_T \right| : T \in \mathcal{T}(G) \right\} \leq \frac{1}{n}$ and $A \mathbf{a}^\star = \mathbf{b}^\star$, then the graph $G$ admits a fractional triangle decomposition.
\end{Lemma}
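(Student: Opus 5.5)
The plan is to reduce Lemma~\ref{lem:frac2} directly to Lemma~\ref{lem:frac1}. The hypothesis $A\mathbf{a}^\star=\mathbf{b}^\star$ is assumed outright. Absolute convergence guarantees that $\mathbf{a}^\star$ is a well-defined vector in $\mathbb{R}^{\mathcal{T}(G)}$, since $\mathcal{T}(G)$ is finite. Hence the only thing left to check is the nonnegativity condition $\mathbf{a}+\mathbf{a}^\star\geq 0$.

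For this I will bound $\mathbf{a}$ from below entrywise. From the inequality $t_e\leq n$ established at the start of Section~\ref{sec:outline}, each of $1/t_{xy}$, $1/t_{xz}$, $1/t_{yz}$ is at least $1/n$. Therefore, for every $T=xyz\in\mathcal{T}(G)$,
$$\mathbf{a}_T=\frac13\left(\frac{1}{t_{xy}}+\frac{1}{t_{xz}}+\frac{1}{t_{yz}}\right)\geq \frac1n.$$
Combining this with the assumption $|\mathbf{a}^\star_T|\leq 1/n$ gives
$$(\mathbf{a}+\mathbf{a}^\star)_T\geq \frac1n-|\mathbf{a}^\star_T|\geq 0$$
for every triangle $T$. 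Both hypotheses of Lemma~\ref{lem:frac1} then hold, and that lemma yields a fractional triangle decomposition of $G$.

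I do not expect a genuine obstacle here. The only points to watch are that $t_e\leq n$ holds for every edge, which is true because the third part has exactly $n$ vertices, and that convergence is used only to make $\mathbf{a}^\star$ well defined.
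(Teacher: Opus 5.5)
Your proposal is correct and follows the paper's proof essentially verbatim. You use $t_e\leq n$ to get $\mathbf{a}_T\geq 1/n$, combine this with $|\mathbf{a}^\star_T|\leq 1/n$ to obtain $\mathbf{a}+\mathbf{a}^\star\geq 0$, and then invoke Lemma~\ref{lem:frac1} together with the assumed identity $A\mathbf{a}^\star=\mathbf{b}^\star$.
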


\begin{proof}
Since $t_e \leq n$ for any $e \in E(G)$, we have that for any $T \in \mathcal{T}(G)$,
$$ \mathbf{a}_T \geq \frac{1}{3} \left( \frac{1}{n} + \frac{1}{n} + \frac{1}{n} \right) = \frac{1}{n}. $$
By assumption, $\max \left\{ \left| \mathbf{a}^\star_T \right| : T \in \mathcal{T}(G) \right\} \leq \frac{1}{n}$. Thus $\mathbf{a} + \mathbf{a}^\star \geq 0$.
Furthermore, since $A \mathbf{a}^\star = \mathbf{b}^\star$, by Lemma \ref{lem:frac1}, $G$ admits a fractional triangle decomposition.
\end{proof}

\subsection{The choice of $\gamma$}\label{sec:proof}

Now, to prove Theorem \ref{thm:main3}, it suffices to choose appropriate $\gamma$ with $0 \leq \gamma \leq 1/4$ such that the series $ \sum_{k=0}^{\infty} \widetilde{\Inv}(\mathbf{b}^{(k)}) $ satisfies the conditions in Lemma \ref{lem:frac2}.

For a good vector $\mathbf{c} \in \mathbb{R}^{E(G)}$, let
$$ M_1(\mathbf{c}) := \max_{e\in E(G)} |\mathbf{c}_e|, \quad M_2(\mathbf{c}) := \frac{1}{n} \max_{v \in X \cup Y \cup Z} |S_\mathbf{c}(v)|, \quad \text{and} \quad M_3(\mathbf{c}) := \frac{|S(\mathbf{c})|}{n^2}.$$
Furthermore, let
$$M(\mathbf{c}) := 2M_1(\mathbf{c}) + 3M_2(\mathbf{c}) + M_3(\mathbf{c}).$$

The following Lemmas \ref{lem:MMbound} and \ref{lem:estimation} will be employed to prove Lemma \ref{lem:converges}. For readability, their proofs are given in Section \ref{sec:MMbound} and Section \ref{sec:estimation}, respectively.

\begin{Lemma}\label{lem:MMbound}
For $0 \leq \gamma \leq 1/4$, the vector $\mathbf{b}^\star \in \mathbb{R}^{E(G)}$ satisfies $3M_1(\mathbf{b}^\star) + 3M_2(\mathbf{b}^\star) + M_3(\mathbf{b}^\star) \leq \frac{4\gamma}{1-2\gamma}$.
\end{Lemma}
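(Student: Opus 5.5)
The plan is to compute $\mathbf{b}^\star$ almost explicitly and bound $M_1$, $M_2$ and $M_3$ separately, using the correlation between the quantities $t_e$ and the degrees. Write $d_x := d(x,Y)=d(x,Z)$, and similarly $d_y$, $d_z$; all of these lie in $[(1-\gamma)n,n]$.

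\emph{Computing the entries.} For an edge $xy$, summing over the $t_{xy}$ triangles $xyz$ gives
$$\mathbf{b}_{xy}=\tfrac13+\tfrac13\sum_{z\in N(x,Z)\cap N(y,Z)}\Big(\tfrac{1}{t_{xz}}+\tfrac{1}{t_{yz}}\Big),$$
and symmetric formulas hold for the other two edge types. For a vertex $x$, every triangle through $x$ contributes to exactly one edge $xy$, so
$$\sum_y \mathbf{b}_{xy}=\sum_{T\ni x}\mathbf{a}_T=\tfrac13\Big(2d_x+\sum_{yz\in E(N(x,Y),N(x,Z))}\tfrac{1}{t_{yz}}\Big).$$
Hence
$$S_{\mathbf{b}^\star}(x)=\tfrac13\Big(d_x-\sum_{yz\in E(N(x,Y),N(x,Z))}t_{yz}^{-1}\Big).$$
Summing over $x$ also gives an exact formula for $S(\mathbf{b}^\star)$.

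\emph{Bounding $M_1$.} The crude bounds $(1-2\gamma)n\le t_e\le n$ only give $M_1\le \frac{4\gamma}{3(1-2\gamma)}$. This already uses up the whole budget after multiplying by $3$, so I will use the sharper inequalities
$$t_{xy}\le \min(d_x,d_y), \qquad t_{xz}\ge d_x+d_z-n\ge d_x-\gamma n.$$
With these, $\mathbf{b}_{xy}\le \frac13+\frac13\big(\frac{d_x}{d_x-\gamma n}+\frac{d_y}{d_y-\gamma n}\big)\le\frac13+\frac23\cdot\frac{1-\gamma}{1-2\gamma}$, which gives $\mathbf{b}^\star_{xy}\ge-\frac{2\gamma}{3(1-2\gamma)}$. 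In the other direction, $\mathbf{b}_{xy}\ge\frac13+\frac23\frac{t_{xy}}{n}\ge 1-\frac{4\gamma}{3}$. Together these give $M_1(\mathbf{b}^\star)\le\frac{4\gamma}{3}$ for $\gamma\le1/4$. It therefore remains to show
$$3M_2+M_3\le \frac{4\gamma}{1-2\gamma}-4\gamma=\frac{8\gamma^2}{1-2\gamma}.$$

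\emph{Bounding $M_2$ and $M_3$.} These must be second order in $\gamma$, which I expect to be the main obstacle. In the formula for $S_{\mathbf{b}^\star}(x)$, the leading terms $d_x$ and the sum cancel to first order. Indeed, the number of edges between $N(x,Y)$ and $N(x,Z)$ is between $d_x(d_x-\gamma n)$ and $d_x^2$, and each $t_{yz}$ lies between roughly $d_y+d_z-n$ and $\min(d_y,d_z)$.

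My first attempt is as follows. Pair each missing edge inside $N(x,Y)\times N(x,Z)$ with the deficiencies $n-d_y$, $n-d_z$. Write $t_{yz}^{-1}=\frac1n(1+\theta_{yz})$ with $0\le\theta_{yz}\le\frac{2\gamma}{1-2\gamma}$. Then $n\,S(x)/n$ becomes $\frac{d_x}{n}\cdot\frac{n-d_x}{n}$ minus $\frac{|\text{missing edges}|}{n^2}$ minus an average of $\theta$. Each of these pieces should be a product of two deficiencies of size at most $\gamma$, giving $|S_{\mathbf{b}^\star}(x)|/n\le c\gamma^2/(1-2\gamma)$ for some small $c$.

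For $M_3$, I will use $S(\mathbf{b}^\star)=\sum_x S_{\mathbf{b}^\star}(x)$, so that $M_3\le M_2$. If that is not enough, I will sum the exact formula globally over all triangles, where $|E(X,Y)|=|E(X,Z)|=|E(Y,Z)|$ gives further cancellation. The target is constants with $3c+c'\le 8$. I will then combine the three bounds, and if the constants come out too large, reallocate slack from the $M_1$ bound, which is not tight at both ends simultaneously.
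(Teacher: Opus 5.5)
Your plan depends on showing $3M_2(\mathbf{b}^\star)+M_3(\mathbf{b}^\star)\le\frac{8\gamma^2}{1-2\gamma}$, and that cannot be done: $M_2(\mathbf{b}^\star)$ is genuinely first order in $\gamma$. Let $G$ be $K_{X,Y,Z}$ with the following edges removed: all edges from one vertex $x_0$ to sets $Y_0\subseteq Y$ and $Z_0\subseteq Z$, where $|Y_0|=|Z_0|=\gamma n$, and a perfect matching between $Y_0$ and $Z_0$. This $G$ is triangle-divisible with $\hat{\delta}(G)=(1-\gamma)n$. For $y\in Y\setminus Y_0$ and $z\in Z\setminus Z_0$ you get $t_{x_0y}=t_{x_0z}=(1-\gamma)n$ and $t_{yz}=n$. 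Hence $\mathbf{b}_{x_0y}=\frac13\bigl(1+1+(1-\gamma)\bigr)$, so $\mathbf{b}^\star_{x_0y}=\gamma/3$ for each of the $(1-\gamma)n$ neighbours $y$ of $x_0$. Thus $S_{\mathbf{b}^\star}(x_0)=\gamma(1-\gamma)n/3$ and $3M_2(\mathbf{b}^\star)\ge\gamma(1-\gamma)$. This exceeds $\frac{8\gamma^2}{1-2\gamma}$ for every $0<\gamma<0.09$, including the relevant $\gamma\approx 0.08$. Your own decomposition shows where the heuristic breaks. The term $\frac{d_x}{n}\cdot\frac{n-d_x}{n}$ is not a product of two deficiencies, since $d_x/n\approx 1$. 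In this example $\theta\equiv 0$ on $N(x_0,Y)\times N(x_0,Z)$ and no edges are missing there, so nothing cancels that term.

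So the budget $\frac{4\gamma}{1-2\gamma}$ must be split at first order, and your bound $3M_1\le 4\gamma$ is then too weak: it leaves only the second-order room that the example rules out. Your negative-side estimate $\mathbf{b}^\star_{xy}\ge-\frac{2\gamma}{3(1-2\gamma)}$ is correct. On the positive side, use $t_{xz}\le d_x$ and $t_{xy}\ge d_x-\gamma n$ instead of $t_{xz}\le n$ and $t_{xy}\ge(1-2\gamma)n$. This gives $\sum_{z:\,xyz\in\mathcal{T}(G)}t_{xz}^{-1}\ge t_{xy}/d_x\ge\frac{1-2\gamma}{1-\gamma}$, and symmetrically for the $t_{yz}^{-1}$ sum. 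Hence $\mathbf{b}^\star_{xy}\le\frac{2\gamma}{3(1-\gamma)}$ and $M_1(\mathbf{b}^\star)\le\frac{2\gamma}{3(1-2\gamma)}$.

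For $M_2$ you need the first-order bound $M_2(\mathbf{b}^\star)\le\frac{2\gamma}{3(1-2\gamma)}$. Write $S_{\mathbf{b}^\star}(x)=\frac13(d_x-C_x)$ with $C_x=\sum_{yz:\,xyz\in\mathcal{T}(G)}t_{yz}^{-1}$, and let $T_x$ be the number of triangles at $x$. The bound follows from $T_x/n\le C_x\le T_x/((1-2\gamma)n)$ together with $d_x(d_x-\gamma n)\le T_x\le d_x^2$.

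These two bounds already use the whole budget, so your fallback $M_3\le M_2$ is useless. You need the exact identity $S(\mathbf{b}^\star)=0$. Summing $\mathbf{a}$ over all triangles gives $\sum_T t_{xy}^{-1}=\sum_T t_{xz}^{-1}=\sum_T t_{yz}^{-1}=m$, where $m=|E(X,Y)|=|E(X,Z)|=|E(Y,Z)|$. Hence $\sum_{xy\in E(X,Y)}\mathbf{b}_{xy}=m$, which is exactly $S(\mathbf{b}^\star)=0$. This is the split $\frac{2\gamma}{1-2\gamma}+\frac{2\gamma}{1-2\gamma}+0$ that the paper uses.
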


\begin{Lemma}\label{lem:estimation}
For $0 \leq \gamma \leq 1/4$ and the sequence $\{\mathbf{b}^{(k)}\}_{k=0}^{\infty}$ defined before Lemma $\ref{lem:frac2}$, 
\begin{itemize}
    \item[$(1)$] $ M(\mathbf{b}^{(k)}) \leq (7\gamma + 6\gamma^2)^k \frac{10\gamma}{3(1-2\gamma)} $;
    \item[$(2)$] $ 3M_1(\mathbf{b}^{(k+1)}) + 3M_2(\mathbf{b}^{(k+1)}) + M_3(\mathbf{b}^{(k+1)}) \leq (9\gamma + 6\gamma^2)(7\gamma + 6\gamma^2)^k \frac{10\gamma}{3(1-2\gamma)} $.
\end{itemize}
\end{Lemma}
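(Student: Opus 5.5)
The plan is to prove a one-step contraction estimate for the map $\mathbf{c}\mapsto \mathbf{c}':=\mathbf{c}-A\widetilde{\Inv}(\mathbf{c})$ on good vectors, and then run an induction on $k$, with the base case supplied by Lemma~\ref{lem:MMbound} (possibly sharpened).

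The first step rewrites $\mathbf{c}'$ so that only ``defect'' triples appear. By Lemma~\ref{lem:Inv}, for an edge $xy$ we have $\mathbf{c}_{xy}=\sum_{z\in Z}\Inv(\mathbf{c})_{xyz}$. Also $(A\widetilde{\Inv}(\mathbf{c}))_{xy}$ is the same sum restricted to those $z$ with $xyz\in\mathcal{T}(G)$. Hence
$$\mathbf{c}'_{xy}=\sum_{z:\,xyz\notin\mathcal{T}(G)}\Inv(\mathbf{c})_{xyz},$$
and similarly for edges in $X\times Z$ and $Y\times Z$. Summing this identity gives $S_{\mathbf{c}'}(v)$ and $S(\mathbf{c}')$ as sums of $\Inv(\mathbf{c})_{xyz}$ over non-triangles $xyz\in X\times Y\times Z$ meeting $v$, respectively over all non-triangles.

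The key observation is that every non-triangle contains at least one non-edge, on which $\mathbf{c}$ vanishes. Therefore
$$|\Inv(\mathbf{c})_{xyz}|\le \frac{2M_1(\mathbf{c})+3M_2(\mathbf{c})+M_3(\mathbf{c})}{n}=\frac{M(\mathbf{c})}{n},$$
whereas for a general triple one only gets $(3M_1+3M_2+M_3)/n$. This explains the two different combinations in the statement.

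The second step counts defect triples, and needs to be done more carefully than a crude bound. For an edge $e$ there are $n-t_e\le 2\gamma n$ non-triangles through it. For a vertex $x$, the non-triangles through $x$ are of three kinds:
\begin{itemize}
\item $xy$ is a non-edge: at most $\gamma n\cdot n$ such triples;
\item $xz$ is a non-edge: at most $\gamma n\cdot n$ such triples;
\item both $xy,xz$ are edges but $yz$ is a non-edge: at most $\gamma n^2$ such triples.
\end{itemize}
In total there are at most $3\gamma n^3$ non-triangles.

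A crude product bound gives $M(\mathbf{c}')\le 16\gamma M(\mathbf{c})$, which is too weak. So I would expand $\Inv(\mathbf{c})_{xyz}$ into its seven terms and bound the sum of each term over the relevant defect set separately.
\begin{itemize}
\item A term such as $\mathbf{c}_{xy}/n$ is nonzero only when $xy$ is an edge, so it is summed over at most $2\gamma n$ values of $z$ per pair.
\item The terms $S_\mathbf{c}(\cdot)/n^2$ and $S(\mathbf{c})/n^3$ are summed over the full defect count.
\item Overlaps between defect types, such as two missing edges in the same triple, are handled by inclusion--exclusion. This is where the $\gamma^2$ corrections should come from.
\end{itemize}
Collecting the bounds on $M_1(\mathbf{c}'),M_2(\mathbf{c}'),M_3(\mathbf{c}')$, the target inequalities are
$$M(\mathbf{c}')\le(7\gamma+6\gamma^2)M(\mathbf{c}),\qquad 3M_1(\mathbf{c}')+3M_2(\mathbf{c}')+M_3(\mathbf{c}')\le(9\gamma+6\gamma^2)M(\mathbf{c}).$$
I expect this bookkeeping, and obtaining exactly these constants, to be the main obstacle. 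If the constants do not come out, I would first try bounding the $S_{\mathbf{c}'}(x)$ sums with the alternative expression $S_{\mathbf{c}'}(x)=S_\mathbf{c}(x)-\sum_{T\ni x}\widetilde{\Inv}(\mathbf{c})_T$, using goodness of $\mathbf{c}$.

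Finally, I would combine the two one-step inequalities. Part (1) follows by induction from $M(\mathbf{c}')\le(7\gamma+6\gamma^2)M(\mathbf{c})$, and part (2) follows by applying the second inequality to $\mathbf{c}=\mathbf{b}^{(k)}$ and then using part (1). The base case needs $M(\mathbf{b}^{\star})\le \frac{10\gamma}{3(1-2\gamma)}$. This is slightly stronger than what Lemma~\ref{lem:MMbound} gives, since that lemma bounds $3M_1+3M_2+M_3$ by $\frac{4\gamma}{1-2\gamma}$. So I would revisit the explicit entries of $\mathbf{b}^\star=\mathbf{1}-A\mathbf{a}$, which are governed by the deviations $1-t_e/t_{e'}$, to also get a separate bound $M_1(\mathbf{b}^\star)\le\frac{2\gamma}{3(1-2\gamma)}$. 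Subtracting this from Lemma~\ref{lem:MMbound} then yields the base case.
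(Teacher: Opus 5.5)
\textbf{Genuine gap: the one-step contraction is not proved.} Your framework matches the paper's. You use the defect representation $\mathbf c'_{xy}=\sum_{z:\,xyz\notin\mathcal T(G)}\Inv(\mathbf c)_{xyz}$ and the bound $|\Inv(\mathbf c)_{xyz}|\le M(\mathbf c)/n$ on non-triangles. You also reduce (1) and (2) to the one-step inequalities $M(\mathbf c')\le(7\gamma+6\gamma^2)M(\mathbf c)$ and $3M_1(\mathbf c')+3M_2(\mathbf c')+M_3(\mathbf c')\le(9\gamma+6\gamma^2)M(\mathbf c)$. But those inequalities are the whole content of the lemma, and you leave them open.

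The method you sketch cannot reach them. Bounding the seven terms of $\Inv(\mathbf c)_{xyz}$ separately in absolute value gives nothing beyond the crude count. Take $S_{\mathbf c'}(x)=\sum_{y\in N(x,Y)}\sum_{z:\,xyz\notin\mathcal T(G)}\Inv(\mathbf c)_{xyz}$, with at most $2\gamma n$ values of $z$ per $y$. There the terms $\mathbf c_{xy}/n,\mathbf c_{xz}/n,\mathbf c_{yz}/n$ give $(2\gamma+\gamma+\gamma)M_1(\mathbf c)$. The three $S_{\mathbf c}(\cdot)/n^2$ terms give $6\gamma M_2(\mathbf c)$, and the last term gives $2\gamma M_3(\mathbf c)$. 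Altogether this is only $M_2(\mathbf c')\le 2\gamma M(\mathbf c)$. The stated constants correspond instead to $M_1(\mathbf c')\le 2\gamma M(\mathbf c)$, $M_2(\mathbf c')\le(\gamma+\gamma^2)M(\mathbf c)$ and $M_3(\mathbf c')\le 3\gamma^2M(\mathbf c)$, the last with no first-order term. The $\gamma^2$ terms do not come from inclusion--exclusion of defect counts. They come from an exact cancellation that trades a sum over the many triples with one missing edge for a sum over triples with two missing edges.

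\textbf{The missing idea.} Apply Lemma~\ref{lem:Inv} on \emph{non-edges}. Since $\mathbf c$ vanishes off $E(G)$, if $xz\notin E(G)$ then $\sum_{y\in Y}\Inv(\mathbf c)_{xyz}=\mathbf c_{xz}=0$. Hence $\sum_{y\in N(x,Y)}\Inv(\mathbf c)_{xyz}=-\sum_{y\in Y\setminus N(x,Y)}\Inv(\mathbf c)_{xyz}$, a sum of at most $\gamma n$ non-triangle entries. Likewise, if $yz\notin E(G)$ then $\sum_{x\in N(y,X)\cap N(z,X)}\Inv(\mathbf c)_{xyz}$ equals minus a sum over at most $2\gamma n$ values of $x$.

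Split $S_{\mathbf c'}(x)$ into two parts:
\begin{itemize}
\item $z\notin N(x,Z)$: swap the sums and use the first identity, giving at most $\gamma n\cdot\gamma n\cdot M(\mathbf c)/n$;
\item $z\in N(x,Z)\setminus N(y,Z)$: at most $n\cdot\gamma n\cdot M(\mathbf c)/n$.
\end{itemize}
This gives $M_2(\mathbf c')\le(\gamma+\gamma^2)M(\mathbf c)$.

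Split $S(\mathbf c')=\sum_{xy\in E(X,Y)}\mathbf c'_{xy}$ by whether $xz$ or $yz$ is the missing edge. Regroup by that non-edge (at most $\gamma n^2$ of each kind) and use the two identities. This gives $M_3(\mathbf c')\le 3\gamma^2M(\mathbf c)$. Together with $M_1(\mathbf c')\le 2\gamma M(\mathbf c)$ you get exactly $2\cdot 2\gamma+3(\gamma+\gamma^2)+3\gamma^2=7\gamma+6\gamma^2$ and $3\cdot 2\gamma+3(\gamma+\gamma^2)+3\gamma^2=9\gamma+6\gamma^2$.

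\textbf{Base case.} Your argument here is also incorrect. Note that $M(\mathbf b^\star)=3M_1(\mathbf b^\star)+3M_2(\mathbf b^\star)+M_3(\mathbf b^\star)-M_1(\mathbf b^\star)$. Subtracting an \emph{upper} bound on $M_1(\mathbf b^\star)$ from the bound of Lemma~\ref{lem:MMbound} therefore does not bound $M(\mathbf b^\star)$; that would need a lower bound on $M_1(\mathbf b^\star)$. Instead use the individual bounds $M_1(\mathbf b^\star),M_2(\mathbf b^\star)\le\frac{2\gamma}{3(1-2\gamma)}$ and $M_3(\mathbf b^\star)=0$, from which Lemma~\ref{lem:MMbound} is assembled. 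They give $M(\mathbf b^\star)\le\frac{10\gamma}{3(1-2\gamma)}$ directly.
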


\begin{Lemma}\label{lem:converges}
If the nonnegative real $\gamma$ satisfies
$$7\gamma + 6\gamma^2 < 1 \quad\text{and}\quad
\frac{2\gamma}{3(1-2\gamma)} \left( 6 + \frac{5(9\gamma + 6\gamma^2)}{1 - 7\gamma - 6\gamma^2} \right) \le 1,$$
then the series $\mathbf{a}^\star = \sum_{k=0}^{\infty} \widetilde{\Inv}(\mathbf{b}^{(k)})$ converges absolutely component-wise. Moreover, $\max \{ |\mathbf{a}^\star_T| : T \in \mathcal{T}(G) \} \le \frac{1}{n}$ and $A\mathbf{a}^\star = \mathbf{b}^\star$.
\end{Lemma}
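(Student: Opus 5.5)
The plan is to bound each term $\widetilde{\Inv}(\mathbf{b}^{(k)})$ entrywise in terms of $M(\mathbf{b}^{(k)})$, sum the resulting geometric series using Lemma~\ref{lem:estimation}, and then identify $A\mathbf{a}^\star$ via telescoping.

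\textbf{Entrywise bound.} For any good $\mathbf{c}$ and any triple $xyz$, the triangle inequality applied to the definition of $\Inv$ gives
$$|\Inv(\mathbf{c})_{xyz}| \le \frac{3M_1(\mathbf{c})}{n} + \frac{3M_2(\mathbf{c})}{n} + \frac{M_3(\mathbf{c})}{n} = \frac{3M_1(\mathbf{c})+3M_2(\mathbf{c})+M_3(\mathbf{c})}{n}.$$
I will use this cruder quantity rather than $M(\mathbf{c})$, since Lemma~\ref{lem:MMbound} and Lemma~\ref{lem:estimation}(2) control exactly $3M_1+3M_2+M_3$.

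\textbf{Summing the series.} For $k=0$, Lemma~\ref{lem:MMbound} gives $|\widetilde{\Inv}(\mathbf{b}^{(0)})_T| \le \frac{4\gamma}{(1-2\gamma)n}$. For $k\ge 1$, Lemma~\ref{lem:estimation}(2) with index $k-1$ gives
$$|\widetilde{\Inv}(\mathbf{b}^{(k)})_T| \le \frac{1}{n}(9\gamma+6\gamma^2)(7\gamma+6\gamma^2)^{k-1}\frac{10\gamma}{3(1-2\gamma)}.$$
Since $7\gamma+6\gamma^2<1$, the geometric series converges, which yields absolute componentwise convergence. The total bound is
$$\frac{1}{n}\left(\frac{4\gamma}{1-2\gamma} + \frac{10\gamma(9\gamma+6\gamma^2)}{3(1-2\gamma)(1-7\gamma-6\gamma^2)}\right) = \frac{1}{n}\cdot\frac{2\gamma}{3(1-2\gamma)}\left(6+\frac{5(9\gamma+6\gamma^2)}{1-7\gamma-6\gamma^2}\right),$$
which is at most $\frac1n$ by the second hypothesis. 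This matches the hypothesis exactly, which suggests this is the intended splitting.

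\textbf{Identifying $A\mathbf{a}^\star$.} By definition, $A\widetilde{\Inv}(\mathbf{b}^{(k)}) = \mathbf{b}^{(k)}-\mathbf{b}^{(k+1)}$. Since $A$ is a finite matrix, it commutes with the absolutely convergent sum. Telescoping then gives
$$A\mathbf{a}^\star = \mathbf{b}^{(0)} - \lim_{K\to\infty}\mathbf{b}^{(K)},$$
so it remains to show $\mathbf{b}^{(K)}\to 0$. This follows because $M_1(\mathbf{b}^{(K)}) \le M(\mathbf{b}^{(K)})/2$, and Lemma~\ref{lem:estimation}(1) makes this decay geometrically to $0$ since $7\gamma+6\gamma^2<1$. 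Hence $A\mathbf{a}^\star=\mathbf{b}^{(0)}=\mathbf{b}^\star$.

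I expect no real obstacle here. The only delicate point is the bookkeeping: the $k=0$ term must use Lemma~\ref{lem:MMbound}, and the terms $k\ge1$ must use Lemma~\ref{lem:estimation}(2) with the index shifted by one, so that the constants match the hypothesis.
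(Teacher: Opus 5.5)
Your proposal is correct and follows essentially the same route as the paper: the same entrywise bound by $(3M_1+3M_2+M_3)/n$, Lemma~\ref{lem:MMbound} for the $k=0$ term, Lemma~\ref{lem:estimation}$(2)$ with the index shifted for $k\ge1$, and the same geometric sum matching the hypothesis. The identification $A\mathbf{a}^\star=\mathbf{b}^\star$ by telescoping, with $\mathbf{b}^{(K)}\to\mathbf{0}$ obtained from Lemma~\ref{lem:estimation}$(1)$ via $M_1\le M/2$, is likewise exactly the paper's argument.
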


\begin{proof}
Without loss of generality, fix an arbitrary triangle $T = xyz \in \mathcal{T}(G)$. We first prove that the series $\sum_{k=0}^{\infty} | \widetilde{\Inv}(\mathbf{b}^{(k)})_T | \leq \frac{1}{n}$, which implies that the series $\sum_{k=0}^{\infty} \widetilde{\Inv}(\mathbf{b}^{(k)})_T$ converges absolutely, and $|\mathbf{a}^\star_T|\leq \sum_{k=0}^{\infty} | \widetilde{\Inv}(\mathbf{b}^{(k)})_T | \leq \frac{1}{n}$.

For any $k \geq 0$, the definition of $\Inv(\cdot)$ implies
$$ | \widetilde{\Inv}(\mathbf{b}^{(k)})_T | \leq \frac{3M_1(\mathbf{b}^{(k)}) + 3M_2(\mathbf{b}^{(k)}) + M_3(\mathbf{b}^{(k)})}{n}. $$
Recall that $\mathbf{b}^{(0)} = \mathbf{b}^\star$. For $k=0$, Lemma \ref{lem:MMbound} yields
$$ | \widetilde{\Inv}(\mathbf{b}^{(0)})_T | \leq \frac{4\gamma}{(1-2\gamma)n}. $$
For all $k \geq 1$, by Lemma \ref{lem:estimation}$(2)$, we have
$$ | \widetilde{\Inv}(\mathbf{b}^{(k)})_T | \leq (9\gamma + 6\gamma^2)(7\gamma + 6\gamma^2)^{k-1} \frac{10\gamma}{3(1-2\gamma)n}. $$
Hence, we have
\begin{align*}
\sum_{k=0}^{\infty} | \widetilde{\Inv}(\mathbf{b}^{(k)})_T |
&\leq \frac{4\gamma}{(1-2\gamma)n} + \frac{10\gamma(9\gamma + 6\gamma^2)}{3(1-2\gamma)n} \sum_{k=1}^{\infty} (7\gamma + 6\gamma^2)^{k-1} \\
&= \frac{2\gamma}{3(1-2\gamma)} \left( 6 + \frac{5(9\gamma + 6\gamma^2)}{1 - 7\gamma - 6\gamma^2} \right) \cdot \frac{1}{n} \leq \frac{1}{n}.
\end{align*}

Next, we prove that $A\mathbf{a}^\star = \mathbf{b}^\star$. By the recursive definition of $\mathbf{b}^{(k)}$, we have
\begin{align*}
    \mathbf{b}^{(1)} &= \mathbf{b}^{(0)} - A \widetilde{\Inv}(\mathbf{b}^{(0)}), \\
    \mathbf{b}^{(2)} &= \mathbf{b}^{(1)} - A \widetilde{\Inv}(\mathbf{b}^{(1)}), \\
    &\ \ \ \ \ \ \ \ \ \ \vdots \\
    \mathbf{b}^{(h)} &= \mathbf{b}^{(h-1)} - A \widetilde{\Inv}(\mathbf{b}^{(h-1)}).
\end{align*}
Summing these $h$ equations yields
$$ \mathbf{b}^{(h)} = \mathbf{b}^{(0)} - \sum_{k=0}^{h-1} A \widetilde{\Inv}(\mathbf{b}^{(k)}) = \mathbf{b}^{(0)} - A \left( \sum_{k=0}^{h-1} \widetilde{\Inv}(\mathbf{b}^{(k)}) \right). $$
By Lemma \ref{lem:estimation}$(1)$, we have
$$ M(\mathbf{b}^{(h)}) \leq (7\gamma + 6\gamma^2)^h \frac{10\gamma}{3(1-2\gamma)}. $$
Since $0\leq 7\gamma + 6\gamma^2 < 1$, as $h \to \infty$, we have $M(\mathbf{b}^{(h)}) \to 0$. This forces $M_1(\mathbf{b}^{(h)}) \to 0$, which implies $\mathbf{b}^{(h)} \to \mathbf{0}$. Therefore, taking the limit as $h \to \infty$ gives
$$ \mathbf{0} = \mathbf{b}^{(0)} - A \left( \sum_{k=0}^{\infty} \widetilde{\Inv}(\mathbf{b}^{(k)}) \right) = \mathbf{b}^\star - A \mathbf{a}^\star. $$
Thus, $A \mathbf{a}^\star = \mathbf{b}^\star$. This completes the proof.
\end{proof}

Now we are in a position to give the proof of Theorem \ref{thm:main3}.

\begin{proof}[\bf Proof of Theorem \ref{thm:main3}]
It is readily checked that the two inequalities for $\gamma$ in Lemma \ref{lem:converges} hold for any $0\leq \gamma\leq 0.0805$, and so we can apply Lemma \ref{lem:converges} and Lemma \ref{lem:frac2} to complete the proof.
\end{proof}

\section{Proof of Lemma \ref{lem:MMbound}}\label{sec:MMbound}

Before giving the proof of Lemma \ref{lem:MMbound}, we need several lemmas. Recall that, for $v \in V(G)$ and $U \subseteq V(G)$, $N(v, U)$ denotes the set of neighbors of $v$ in $U$, and $d(v, U)$ is defined as $|N(v, U)|$.

\begin{Lemma}\label{lem:M1bound}
For $0 \leq \gamma \leq 1/4$, the vector $\mathbf{b}^\star \in \mathbb{R}^{E(G)}$ satisfies $ M_1(\mathbf{b}^\star) \leq \frac{2\gamma}{3(1 - 2\gamma)}$.
\end{Lemma}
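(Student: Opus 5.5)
We need to bound $|\mathbf{b}^\star_e| = |1 - \mathbf{b}_e|$ for each edge $e$. Fix $e = xy \in E(X,Y)$ (the other two edge types are symmetric). By definition of $\mathbf{b} = A\mathbf{a}$,
$$\mathbf{b}_{xy} = \sum_{z \in N(x,Z)\cap N(y,Z)} \frac{1}{3}\left(\frac{1}{t_{xy}} + \frac{1}{t_{xz}} + \frac{1}{t_{yz}}\right) = \frac{1}{3} + \frac{1}{3}\sum_{z} \left(\frac{1}{t_{xz}} + \frac{1}{t_{yz}}\right),$$
since the sum runs over exactly $t_{xy}$ vertices $z$. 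Hence
$$\mathbf{b}^\star_{xy} = \frac{2}{3} - \frac{1}{3}\sum_{z}\left(\frac{1}{t_{xz}} + \frac{1}{t_{yz}}\right) = \frac{1}{3}\sum_{z}\left(\frac{1}{t_{xy}}-\frac{1}{t_{xz}}\right) + \frac{1}{3}\sum_{z}\left(\frac{1}{t_{xy}}-\frac{1}{t_{yz}}\right).$$
The plan is to bound each of the two sums in absolute value by $\frac{\gamma}{1-2\gamma}$, giving $M_1(\mathbf{b}^\star)\le \frac{2\gamma}{3(1-2\gamma)}$.

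A crude bound already suffices. The identity $\frac{2}{3}$ compared with $\frac{1}{3}\sum_z(\cdot)$ shows that each sum $\sum_z \frac{1}{t_{xz}}$ is a sum of $t_{xy}$ terms. Each term lies in $[\frac{1}{n}, \frac{1}{(1-2\gamma)n}]$ because $(1-2\gamma)n\le t_e\le n$. Also, $t_{xy}$ itself lies in $[(1-2\gamma)n, n]$. Thus
$$\frac{t_{xy}}{n} \le \sum_z \frac{1}{t_{xz}} \le \frac{t_{xy}}{(1-2\gamma)n},$$
so $\sum_z\frac{1}{t_{xz}}\in\left[1-2\gamma, \frac{1}{1-2\gamma}\right]$. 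It follows that $1-\sum_z \frac{1}{t_{xz}}$ lies in $\left[-\frac{2\gamma}{1-2\gamma},\, 2\gamma\right]$, and has absolute value at most $\frac{2\gamma}{1-2\gamma}$. This would give only $M_1\le \frac{4\gamma}{3(1-2\gamma)}$, a factor $2$ too weak. So the main obstacle is to sharpen this by a factor of two.

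To gain the factor $2$ we must use the correlation between $t_{xy}$ and the $t_{xz}$'s. We have $t_{xz} = |N(x,Y)\cap N(z,Y)| \ge d(x,Y) + d(z,Y) - n$. Write $d(x,Y) = n - \alpha n$ with $\alpha\le\gamma$, and similarly $d(y,Z)=n-\beta n$; note $d(x,Z)=d(x,Y)$ by divisibility. Then $t_{xy}\le \min(d(x,Z),d(y,Z)) = n-\max(\alpha,\beta)n$, while $t_{xz}\ge (1-\alpha-\gamma)n$. For the upper side, $\sum_z \frac{1}{t_{xz}} \le \frac{t_{xy}}{(1-\alpha-\gamma)n}\le \frac{1-\alpha}{1-\alpha-\gamma}$, so $\sum_z\frac{1}{t_{xz}}-1\le\frac{\gamma}{1-\alpha-\gamma}\le\frac{\gamma}{1-2\gamma}$. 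For the lower side, $t_{xy}\ge d(x,Z)+d(y,Z)-n = (1-\alpha-\beta)n$ and $t_{xz}\le d(x,Y)=(1-\alpha)n$. This gives $\sum_z\frac{1}{t_{xz}}\ge\frac{1-\alpha-\beta}{1-\alpha}$, so $1-\sum_z\frac{1}{t_{xz}}\le \frac{\beta}{1-\alpha}\le\frac{\gamma}{1-\gamma}\le\frac{\gamma}{1-2\gamma}$. The same argument applies to $\sum_z \frac{1}{t_{yz}}$ with the roles of $x$ and $y$ swapped.

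Combining, each bracket $1-\sum_z\frac{1}{t_{xz}}$ and $1-\sum_z\frac{1}{t_{yz}}$ has absolute value at most $\frac{\gamma}{1-2\gamma}$. Since $\mathbf{b}^\star_{xy}=\frac13\bigl(1-\sum_z\frac{1}{t_{xz}}\bigr)+\frac13\bigl(1-\sum_z\frac{1}{t_{yz}}\bigr)$, this yields $|\mathbf{b}^\star_{xy}|\le\frac{2\gamma}{3(1-2\gamma)}$. Symmetry over edge types finishes the proof. The condition $\gamma\le 1/4$ only ensures the denominators are positive.
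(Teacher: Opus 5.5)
Your proof is correct and takes essentially the same route as the paper: you write $\mathbf{b}_{xy}=\frac13+\frac13\sum_z t_{xz}^{-1}+\frac13\sum_z t_{yz}^{-1}$, then use divisibility ($d(x,Y)=d(x,Z)$) to tie $t_{xy}$ and every $t_{xz}$ to the same degree $d_x$ (and likewise for $y$), which yields $\frac{1-2\gamma}{1-\gamma}\le\sum_z t_{xz}^{-1}\le\frac{1-\gamma}{1-2\gamma}$. Your lower bound via $t_{xy}\ge(1-\alpha-\beta)n$ is marginally sharper before you relax $\beta\le\gamma$, but after that relaxation it is exactly the paper's estimate $(d_x-\gamma n)/d_x$.
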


\begin{proof}
Without loss of generality, fix an arbitrary edge $e = xy \in E(X,Y)$. It suffices to show that $|\mathbf{b}^\star_e| = |1-\mathbf{b}_e| \leq \frac{2\gamma}{3(1 - 2\gamma)}$. Define
$$ \alpha_{xy} := \sum_{z : xyz \in \mathcal{T}(G)} \frac{1}{t_{xz}} \quad \text{and} \quad \beta_{xy} := \sum_{z : xyz \in \mathcal{T}(G)} \frac{1}{t_{yz}}.$$
Recall that $\mathbf{b}= A \mathbf{a}$. Then
\begin{align*}
\mathbf{b}_e &= \sum_{T\in \mathcal{T}(G) : e\in T } \mathbf{a}_T =  \sum_{z : xyz \in \mathcal{T}(G)} \mathbf{a}_{xyz} = \sum_{z : xyz \in \mathcal{T}(G)} \frac{1}{3} \left( \frac{1}{t_{xy}} + \frac{1}{t_{xz}} + \frac{1}{t_{yz}} \right) \\
&= \frac{1}{3} \left( \sum_{z : xyz \in \mathcal{T}(G)} \frac{1}{t_{xy}} \right) + \frac{1}{3} \left( \sum_{z : xyz \in \mathcal{T}(G)} \frac{1}{t_{xz}} \right) + \frac{1}{3} \left( \sum_{z : xyz \in \mathcal{T}(G)} \frac{1}{t_{yz}} \right) = \frac{1}{3} + \frac{1}{3} \alpha_{xy} + \frac{1}{3} \beta_{xy}.
\end{align*}
Since $G$ is triangle-divisible, write 
$$d_x := d(x,Y)=d(x,Z).$$ 
For any $z \in Z$, since $\hat{\delta}(G) \geq (1 - \gamma)n$, we have $d(z,Y) \geq (1 - \gamma)n$. Thus, by the inclusion-exclusion principle, we get $d_x - \gamma n \leq t_{xz} = |N(x,Y) \cap N(z,Y)| \leq d_x$. Similarly, we have $d_x - \gamma n \leq t_{xy} \leq d_x$. Hence,
\begin{align*}
\alpha_{xy} &= \sum_{z : xyz \in \mathcal{T}(G)} \frac{1}{t_{xz}} \leq \sum_{z : xyz \in \mathcal{T}(G)} \frac{1}{d_x - \gamma n} = \frac{t_{xy}}{d_x - \gamma n} 
\leq \frac{d_x}{d_x - \gamma n} = \frac{1}{1-\frac{\gamma n}{d_x}} \leq \frac{1}{1-\frac{\gamma n}{(1-\gamma) n}} = \frac{1 - \gamma}{1 - 2\gamma},
\end{align*}
and
\begin{align*}
\alpha_{xy} &= \sum_{z : xyz \in \mathcal{T}(G)} \frac{1}{t_{xz}} \geq \sum_{z : xyz \in \mathcal{T}(G)} \frac{1}{d_x} = \frac{t_{xy}}{d_x} 
\geq \frac{d_x - \gamma n}{d_x} = 1-\frac{\gamma n}{d_x} \geq 1-\frac{\gamma n}{(1-\gamma) n} = \frac{1 - 2\gamma}{1 - \gamma}.
\end{align*}
That is,
$$ \frac{1 - 2\gamma}{1 - \gamma} \leq \alpha_{xy} \leq \frac{1 - \gamma}{1 - 2\gamma}. $$
Similarly, we have
$$ \frac{1 - 2\gamma}{1 - \gamma} \leq \beta_{xy} \leq \frac{1 - \gamma}{1 - 2\gamma}. $$
Therefore,
$$ 1 - \frac{2\gamma}{3(1 - \gamma)} \leq \mathbf{b}_e \leq 1 + \frac{2\gamma}{3(1 - 2\gamma)}. $$
Since $\frac{2\gamma}{3(1 - \gamma)} \leq \frac{2\gamma}{3(1 - 2\gamma)}$, this implies that
$$ |1-\mathbf{b}_e| \leq \frac{2\gamma}{3(1 - 2\gamma)}. $$
This completes the proof.
\end{proof}

\begin{Lemma}\label{lem:M2bound}
For $0 \leq \gamma \leq 1/4$, the vector $\mathbf{b}^\star \in \mathbb{R}^{E(G)}$ satisfies $ M_2(\mathbf{b}^\star) \leq \frac{2\gamma}{3(1 - 2\gamma)}$.
\end{Lemma}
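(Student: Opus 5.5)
Fix a vertex, say $x\in X$; the cases $y\in Y$ and $z\in Z$ follow by symmetry. We must show $|S_{\mathbf{b}^\star}(x)|\le \frac{2\gamma n}{3(1-2\gamma)}$, where $S_{\mathbf{b}^\star}(x)=\sum_{y\in N(x,Y)}(1-\mathbf{b}_{xy}) = d_x-\sum_{y\in N(x,Y)}\mathbf{b}_{xy}$. The plan is to compute $\sum_{y}\mathbf{b}_{xy}$ exactly as far as possible, exploiting cancellations that a crude bound $d_x\cdot M_1(\mathbf{b}^\star)$ would miss. That crude bound only gives $\frac{2\gamma n}{3(1-2\gamma)}$ times $d_x/n$, which is at most the target anyway. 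Indeed $d_x\le n$, so $|S_{\mathbf{b}^\star}(x)|\le \sum_{y\in N(x,Y)}|\mathbf{b}^\star_{xy}|\le d_x M_1(\mathbf{b}^\star)\le n\cdot\frac{2\gamma}{3(1-2\gamma)}$.

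So the proof reduces immediately to Lemma~\ref{lem:M1bound}. First note that, since $\mathbf{b}^\star$ is good (Lemma~\ref{lem:repairable3}), $S_{\mathbf{b}^\star}(v)$ is well defined for every $v\in X\cup Y\cup Z$. Next, write $S_{\mathbf{b}^\star}(v)$ as a sum over the edges from $v$ to one other part, of which there are $d(v,W)\le n$. Apply the triangle inequality together with Lemma~\ref{lem:M1bound}, and divide by $n$ to get $M_2(\mathbf{b}^\star)\le M_1(\mathbf{b}^\star)\le\frac{2\gamma}{3(1-2\gamma)}$.

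There is no real obstacle here. The only point to check is that the sum defining $S_{\mathbf{b}^\star}(v)$ runs only over edges of $G$, because entries outside $E(G)$ are zero by convention, so it has at most $n$ nonzero terms.

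If a sharper bound were wanted, one could instead compute $\sum_y\mathbf{b}_{xy}=\frac13\bigl(d_x+\sum_y\alpha_{xy}+\sum_y\beta_{xy}\bigr)$. Swapping the order of summation gives $\sum_y\alpha_{xy}=\sum_{z\in N(x,Z)}t_{xz}/t_{xz}=d_x$ exactly, so the $\alpha$ part cancels. Only the $\beta$ term then contributes error, yielding roughly half the bound. That refinement is not needed for the stated inequality.
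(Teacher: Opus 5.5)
Your proof is correct, and it takes a genuinely shorter route than the paper. You use the general fact that $M_2(\mathbf{c})\le M_1(\mathbf{c})$ for any good $\mathbf{c}$: each $S_{\mathbf{c}}(v)$ is a sum of at most $n$ entries of $\mathbf{c}$. So the lemma is an immediate corollary of Lemma~\ref{lem:M1bound}.

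The paper instead derives the exact identity $S_{\mathbf{b}^\star}(x)=\frac13(d_x-C_x)$ with $C_x=\sum_{y\in N(x,Y)}\beta_{xy}$. Here the $1/t_{xy}$ and $1/t_{xz}$ parts each sum to exactly $d_x$, which is the cancellation you mention at the end. It then estimates $C_x$ globally through the triangle count $T_x$, using $d_x(d_x-\gamma n)\le T_x\le d_x^2$ and the uniform bounds $(1-2\gamma)n\le t_{yz}\le n$. This is lossy enough that it lands on exactly the same constant $\frac{2\gamma}{3(1-2\gamma)}$ as your one-line argument. For the stated inequality the extra work therefore buys nothing.

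Your closing remark, on the other hand, buys something real. Write $S_{\mathbf{b}^\star}(x)=\frac13\sum_{y\in N(x,Y)}(1-\beta_{xy})$ and use the per-edge bounds $\frac{1-2\gamma}{1-\gamma}\le\beta_{xy}\le\frac{1-\gamma}{1-2\gamma}$ from the proof of Lemma~\ref{lem:M1bound}. These give $|1-\beta_{xy}|\le\frac{\gamma}{1-2\gamma}$, hence $M_2(\mathbf{b}^\star)\le\frac{\gamma}{3(1-2\gamma)}$, with vertices of $Y$ and $Z$ handled symmetrically. That is exactly half the stated bound.

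Propagated through Lemma~\ref{lem:MMbound} and Corollary~\ref{cor:Mbound}, this lowers their bounds to $\frac{3\gamma}{1-2\gamma}$ and $\frac{7\gamma}{3(1-2\gamma)}$. The corresponding modified condition in the argument of Lemma~\ref{lem:converges} then holds for all $0\le\gamma\le 0.089$, rather than only up to about $0.0805$.
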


\begin{proof}
Without loss of generality, fix an arbitrary vertex $x \in X$. It suffices to show that $\frac{|S_{\mathbf{b}^\star}(x)|}{n} \leq \frac{2\gamma}{3(1 - 2\gamma)}$. Since $G$ is triangle-divisible, write $d_x := d(x,Y) = d(x,Z)$. Let
$$ C_x := \sum_{yz : xyz \in \mathcal{T}(G)} \frac{1}{t_{yz}} = \sum_{y \in N(x,Y)} \sum_{z : xyz \in \mathcal{T}(G)} \frac{1}{t_{yz}}. $$
Observe that
\begin{align*}
\sum_{y \in N(x,Y)} \sum_{z : xyz \in \mathcal{T}(G)} \frac{1}{t_{xy}} &= \sum_{y \in N(x,Y)} 1 = d_x, \\
\sum_{y \in N(x,Y)} \sum_{z : xyz \in \mathcal{T}(G)} \frac{1}{t_{xz}} &= \sum_{z \in N(x,Z)} \frac{1}{t_{xz}} \left( \sum_{y \in N(x,Y) \cap N(z,Y)} 1 \right) = \sum_{z \in N(x,Z)} \frac{1}{t_{xz}} ( t_{xz} ) = d_x, \\
\sum_{y \in N(x,Y)} \sum_{z : xyz \in \mathcal{T}(G)} \frac{1}{t_{yz}} &= C_x.
\end{align*}
Combining these, since $\mathbf{b}= A \mathbf{a}$, we have
\begin{align*}
\sum_{y \in N(x,Y)} \mathbf{b}_{xy} &= \sum_{y \in N(x,Y)} \sum_{z : xyz \in \mathcal{T}(G)} \mathbf{a}_{xyz} 
= \frac{1}{3} \sum_{y \in N(x,Y)} \sum_{z : xyz \in \mathcal{T}(G)} \left( \frac{1}{t_{xy}} + \frac{1}{t_{xz}} + \frac{1}{t_{yz}} \right) 
= \frac{2}{3} d_x + \frac{1}{3}C_x.
\end{align*}
Thus,
$$ S_{\mathbf{b}^\star}(x) = \sum_{y \in N(x,Y)} \mathbf{b}^\star_{xy} = \sum_{y \in N(x,Y)} ( 1 - \mathbf{b}_{xy} ) = d_x - \sum_{y \in N(x,Y)} \mathbf{b}_{xy} = \frac{1}{3} ( d_x - C_x). $$

Let $T_x$ denote the number of triangles containing $x$ in $G$. Since $(1-2\gamma) n \leq t_{yz} \leq n$ for any $yz \in E(Y,Z)$, we have
$$ \frac{T_x}{n} \leq C_x \leq \frac{T_x}{(1-2\gamma) n}.$$
On one hand,
\begin{align*}
T_x & = \sum_{y \in N(x,Y)} t_{xy} = \sum_{y \in N(x,Y)} | N(y,Z) \cap N(x,Z) | \\
& = \sum_{y \in N(x,Y)} (| N(y,Z) | + | N(x,Z) | - | N(y,Z) \cup N(x,Z) |) \\
&\geq \sum_{y \in N(x,Y)} ((1-\gamma) n + d_x - n)= \sum_{y \in N(x,Y)} (d_x - \gamma n) = d_x (d_x - \gamma n).
\end{align*}
Consequently, since $C_x \geq \frac{T_x}{n} \geq \frac{d_x(d_x - \gamma n)}{n}$ and $(1-\gamma)n \leq d_x \leq n$, we have
$$ d_x - C_x \le d_x - \frac{d_x(d_x - \gamma n)}{n} = \frac{d_x (n - d_x)}{n} + \gamma d_x \leq \frac{n \cdot \gamma n}{n} + \gamma n = 2\gamma n. $$
On the other hand, Clearly, $T_x \leq d_x^2$. Since $C_x \leq \frac{T_x}{(1-2\gamma)n} \leq \frac{d_x^2}{(1-2\gamma)n}$, we have
$$ C_x - d_x \leq \frac{d_x^2}{(1-2\gamma)n} - d_x = \frac{d_x (d_x - n + 2\gamma n)}{(1-2\gamma)n} \leq \frac{n \cdot 2\gamma n}{(1-2\gamma)n} = \frac{2\gamma n}{1-2\gamma}. $$
Since $2\gamma n \leq \frac{2\gamma n}{1-2\gamma}$, it follows that $|d_x - C_x| \leq \frac{2\gamma n}{1-2\gamma}$. Hence,
$$ \frac{|S_{\mathbf{b}^\star}(x)|}{n} = \frac{|d_x - C_x|}{3n} \leq \frac{1}{3n} \cdot \frac{2\gamma n}{1-2\gamma} = \frac{2\gamma}{3(1 - 2\gamma)}. $$
This completes the proof.
\end{proof}

\begin{Lemma}\label{lem:M3bound}
The vector $\mathbf{b}^\star \in \mathbb{R}^{E(G)}$ satisfies $ M_3(\mathbf{b}^\star) = 0$.
\end{Lemma}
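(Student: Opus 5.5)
We need $S(\mathbf{b}^\star)=0$, that is, $\sum_{xy\in E(X,Y)}\mathbf{b}_{xy}=|E(X,Y)|$. The plan is to compute the total mass of $\mathbf{b}=A\mathbf{a}$ on the $X$--$Y$ edges by summing over triangles, and then to use that all three bipartite edge sets have the same size.

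Since each triangle contains exactly one $XY$ edge, we can interchange summations:
\[
\sum_{xy\in E(X,Y)}\mathbf{b}_{xy}=\sum_{T\in\mathcal{T}(G)}\mathbf{a}_T=\frac13\sum_{T=xyz\in\mathcal{T}(G)}\Big(\frac1{t_{xy}}+\frac1{t_{xz}}+\frac1{t_{yz}}\Big).
\]
Next we regroup each of the three sums by the edge appearing in the denominator. For a fixed edge $e$, exactly $t_e$ triangles contain $e$, so $e$ contributes $t_e\cdot\frac1{t_e}=1$ to its sum. For this we must check that $t_e\ge(1-2\gamma)n>0$, which holds because $\gamma\le1/4$, so no division by zero occurs.

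Hence the first sum equals $|E(X,Y)|$, the second $|E(X,Z)|$, and the third $|E(Y,Z)|$. Triangle-divisibility gives $|E(X,Y)|=|E(X,Z)|=|E(Y,Z)|$, so
\[
\sum_{T\in\mathcal{T}(G)}\mathbf{a}_T=\tfrac13\cdot3|E(X,Y)|=|E(X,Y)|.
\]
Therefore
\[
S(\mathbf{b}^\star)=\sum_{xy}\bigl(1-\mathbf{b}_{xy}\bigr)=|E(X,Y)|-|E(X,Y)|=0,
\]
and so $M_3(\mathbf{b}^\star)=0$.

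Here $S(\cdot)$ is well defined because $\mathbf{b}^\star$ is good by Lemma~\ref{lem:repairable3}. There is no real obstacle in this argument; the only point needing care is the regrouping of the triangle sum by edges, which is the same double counting already used in Lemma~\ref{lem:M2bound}.
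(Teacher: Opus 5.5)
Your proposal is correct and follows the paper's proof: write $\sum_{xy\in E(X,Y)}\mathbf{b}_{xy}$ as $\sum_{T}\mathbf{a}_T$, regroup each of the three $1/t_e$ sums by edge to get $|E(X,Y)|$, $|E(X,Z)|$ and $|E(Y,Z)|$, and then use triangle-divisibility to equate these. Your remark that $t_e \geq (1-2\gamma)n > 0$ makes explicit a point the paper uses only implicitly.
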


\begin{proof}
Since $G$ is triangle-divisible, let $m:=|E(X, Y)| = |E(X, Z)| = |E(Y, Z)|$.
Observe that
$$ \sum_{xyz \in \mathcal{T}(G)} \frac{1}{t_{xy}} = \sum_{xy \in E(X,Y)} \sum_{z \in Z : xyz \in \mathcal{T}(G)} \frac{1}{t_{xy}} = \sum_{xy \in E(X,Y)} 1 = m. $$
Similarly, we have $\sum_{xyz \in \mathcal{T}(G)} \frac{1}{t_{xz}} = \sum_{xyz \in \mathcal{T}(G)} \frac{1}{t_{yz}} = m$.
Therefore,
\begin{align*}
\sum_{xy \in E(X,Y)} \mathbf{b}_{xy} &= \sum_{xy \in E(X,Y)} \sum_{z \in Z : xyz \in \mathcal{T}(G)} \mathbf{a}_{xyz} = \sum_{xyz \in \mathcal{T}(G)} \mathbf{a}_{xyz} 
= \sum_{xyz \in \mathcal{T}(G)} \frac{1}{3} \left( \frac{1}{t_{xy}} + \frac{1}{t_{xz}} + \frac{1}{t_{yz}} \right) \\
&= \frac{1}{3} \left( \sum_{xyz \in \mathcal{T}(G)} \frac{1}{t_{xy}} + \sum_{xyz \in \mathcal{T}(G)} \frac{1}{t_{xz}} + \sum_{xyz \in \mathcal{T}(G)} \frac{1}{t_{yz}} \right) 
= \frac{1}{3} (m+m+m) = m.
\end{align*}
Consequently, we have
$$ S(\mathbf{b}^\star) = \sum_{xy \in E(X,Y)} (1- \mathbf{b}_{xy}) = \left( \sum_{xy \in E(X,Y)} 1 \right) - \left( \sum_{xy \in E(X,Y)} \mathbf{b}_{xy} \right) = m-m = 0. $$
Hence, $M_3(\mathbf{b}^\star) = \frac{|S(\mathbf{b}^\star)|}{n^2} = 0$.
\end{proof}

Now we can give the proof of Lemma \ref{lem:MMbound}.

\begin{proof}[\bf Proof of Lemma \ref{lem:MMbound}]
Combine Lemmas \ref{lem:M1bound}, \ref{lem:M2bound}, and \ref{lem:M3bound} to complete the proof.
\end{proof}

The following corollary, which follows immediately from the definition of $M(\mathbf{b}^\star)$ and Lemmas \ref{lem:M1bound}, \ref{lem:M2bound}, and~\ref{lem:M3bound}, will be used to prove Lemma~\ref{lem:estimation} in Section~\ref{sec:estimation}.

\begin{Corollary}\label{cor:Mbound}
For $0 \leq \gamma \leq 1/4$, the vector $\mathbf{b}^\star \in \mathbb{R}^{E(G)}$ satisfies $ M(\mathbf{b}^\star) \leq \frac{10\gamma}{3(1-2\gamma)}$.
\end{Corollary}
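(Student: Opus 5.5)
The corollary asks us to bound $M(\mathbf{b}^\star)=2M_1(\mathbf{b}^\star)+3M_2(\mathbf{b}^\star)+M_3(\mathbf{b}^\star)$. We obtain this directly from the three component estimates already proved for $\mathbf{b}^\star$, all of which hold for $0\le\gamma\le 1/4$; no new combinatorial input is needed.

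First, Lemma~\ref{lem:M1bound} gives $M_1(\mathbf{b}^\star)\le \frac{2\gamma}{3(1-2\gamma)}$. Hence $2M_1(\mathbf{b}^\star)\le \frac{4\gamma}{3(1-2\gamma)}$.

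Second, Lemma~\ref{lem:M2bound} gives $M_2(\mathbf{b}^\star)\le \frac{2\gamma}{3(1-2\gamma)}$. Hence $3M_2(\mathbf{b}^\star)\le \frac{6\gamma}{3(1-2\gamma)}$.

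Third, Lemma~\ref{lem:M3bound} gives $M_3(\mathbf{b}^\star)=0$.

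Adding the three bounds yields
$$M(\mathbf{b}^\star)\le \frac{4\gamma+6\gamma}{3(1-2\gamma)}=\frac{10\gamma}{3(1-2\gamma)},$$
which is the claimed estimate. The only point to check is that the hypothesis $0\le\gamma\le1/4$ matches that of Lemmas~\ref{lem:M1bound} and~\ref{lem:M2bound}, which it does; in particular $1-2\gamma>0$, so the denominators are positive. There is no real obstacle: the statement is a bookkeeping consequence of the earlier lemmas, with the weights $2$ and $3$ in the definition of $M$ producing the coefficient $10/3$.
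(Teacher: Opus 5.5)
Your proposal is correct and is exactly the paper's argument: the corollary is stated to follow immediately from the definition $M = 2M_1 + 3M_2 + M_3$ together with Lemmas~\ref{lem:M1bound}, \ref{lem:M2bound}, and~\ref{lem:M3bound}. Your arithmetic $\frac{4\gamma + 6\gamma}{3(1-2\gamma)} = \frac{10\gamma}{3(1-2\gamma)}$ matches the stated bound.
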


\section{Proof of Lemma \ref{lem:estimation}}\label{sec:estimation}

This section proves Lemma~\ref{lem:estimation}. We first establish auxiliary Lemmas \ref{lem:M1Delta}, \ref{lem:M2Delta}, and \ref{lem:M3Delta} using Lemmas \ref{lem:InvBoundNonTriangle} and \ref{lem:DeltaCxy}; Lemma \ref{lem:estimation} then follows from these three lemmas. Recall that, whenever necessary, we extend a vector $\mathbf{c} \in \mathbb{R}^{E(G)}$ to $\mathbb{R}^{E(K_{X,Y,Z})}$ by setting its entries outside $E(G)$ to zero.

\begin{Lemma}\label{lem:InvBoundNonTriangle}
For any good vector $\mathbf{c} \in \mathbb{R}^{E(G)}$ and any triple $xyz \in X \times Y \times Z$ such that $xyz \notin \mathcal{T}(G)$, we have $ |\Inv(\mathbf{c})_{xyz}| \leq \frac{M(\mathbf{c})}{n}$.
\end{Lemma}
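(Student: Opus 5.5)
The plan is to use that $xyz \notin \mathcal{T}(G)$ forces at least one of the three pairs $xy$, $xz$, $yz$ to be a non-edge of $G$. Under the convention that $\mathbf{c}$ is extended to $E(K_{X,Y,Z})$ by zero outside $E(G)$, the corresponding entry of $\mathbf{c}$ vanishes. So in the numerator $\mathbf{c}_{xy}+\mathbf{c}_{xz}+\mathbf{c}_{yz}$ at most two terms are nonzero, and each has absolute value at most $M_1(\mathbf{c})$. This is exactly where the coefficient $2$ (rather than $3$) on $M_1$ in the definition of $M(\mathbf{c})$ comes from.

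Concretely, I would apply the triangle inequality to the defining formula of $\Inv(\mathbf{c})_{xyz}$:
$$|\Inv(\mathbf{c})_{xyz}| \le \frac{|\mathbf{c}_{xy}+\mathbf{c}_{xz}+\mathbf{c}_{yz}|}{n} + \frac{|S_\mathbf{c}(x)|+|S_\mathbf{c}(y)|+|S_\mathbf{c}(z)|}{n^2} + \frac{|S(\mathbf{c})|}{n^3}.$$
The first term is at most $2M_1(\mathbf{c})/n$ by the observation above. Each $|S_\mathbf{c}(v)|$ is at most $nM_2(\mathbf{c})$ by definition of $M_2$, so the second term is at most $3M_2(\mathbf{c})/n$. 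The third term equals $M_3(\mathbf{c})/n$ by definition of $M_3$. Summing gives $(2M_1+3M_2+M_3)(\mathbf{c})/n = M(\mathbf{c})/n$.

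There is no real obstacle here. The one point that needs care is to justify explicitly that a triple which is not a triangle of $G$ must miss at least one edge, since $x,y,z$ lie in the three distinct parts and a triangle is just the three pairwise edges. The zero-extension convention then makes the corresponding entry of $\mathbf{c}$ vanish. Goodness of $\mathbf{c}$ is used only so that $S_\mathbf{c}(\cdot)$ and $S(\mathbf{c})$ are well defined.
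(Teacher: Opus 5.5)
Your proposal is correct and follows essentially the same argument as the paper. A missing edge makes one of $\mathbf{c}_{xy},\mathbf{c}_{xz},\mathbf{c}_{yz}$ vanish under the zero-extension convention, and the triangle inequality then bounds the three terms by $2M_1(\mathbf{c})/n$, $3M_2(\mathbf{c})/n$ and $M_3(\mathbf{c})/n$.
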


\begin{proof}
Suppose that a triple $xyz \in X \times Y \times Z$ does not span a triangle in $G$. Then, at least one of the pairs $xy, xz, yz$ is not an edge in $G$. Thus, at most two of the terms $\mathbf{c}_{xy}, \mathbf{c}_{xz}, \mathbf{c}_{yz}$ can be non-zero. Consequently,
$$ \left| \frac{\mathbf{c}_{xy} + \mathbf{c}_{xz} + \mathbf{c}_{yz}}{n} \right| \le \frac{2M_1(\mathbf{c})}{n}. $$
By the definition of $M_2(\mathbf{c})$, we have
$$ \left| \frac{S_\mathbf{c}(x) + S_\mathbf{c}(y) + S_\mathbf{c}(z)}{n^2} \right| \le \frac{3M_2(\mathbf{c})}{n}. $$
Recalling that $\frac{|S(\mathbf{c})|}{n^3} = \frac{M_3(\mathbf{c})}{n}$ and $M(\mathbf{c}) = 2M_1(\mathbf{c}) + 3M_2(\mathbf{c}) + M_3(\mathbf{c})$, we obtain
$$ |\Inv(\mathbf{c})_{xyz}| \leq \frac{2M_1(\mathbf{c})}{n} + \frac{3M_2(\mathbf{c})}{n} + \frac{M_3(\mathbf{c})}{n} = \frac{M(\mathbf{c})}{n}. $$
This completes the proof.
\end{proof}

For convenience, given a good vector $\mathbf{c} \in \mathbb{R}^{E(G)}$, we set 
$$\Delta\mathbf{c} := \mathbf{c} - A \widetilde{\Inv}(\mathbf{c}) \in \mathbb{R}^{E(G)}.$$ 
By Lemmas \ref{lem:repairable1} and \ref{lem:repairable2}, $\Delta\mathbf{c}$ is also a good vector.

\begin{Lemma}\label{lem:DeltaCxy}
For any good vector $\mathbf{c} \in \mathbb{R}^{E(G)}$ and any edge $xy \in E(X,Y)$, 
$$ \Delta\mathbf{c}_{xy} = \sum_{z \in Z \setminus (N(x,Z) \cap N(y,Z))} \Inv(\mathbf{c})_{xyz} = \sum_{z \in Z : xyz \notin \mathcal{T}(G)} \Inv(\mathbf{c})_{xyz}. $$
\end{Lemma}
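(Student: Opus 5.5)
The plan is to compute $(A\widetilde{\Inv}(\mathbf{c}))_{xy}$ directly and compare it with $\mathbf{c}_{xy}$ by means of Lemma~\ref{lem:Inv}. Fix an edge $xy \in E(X,Y)$. By the definition of $A$, the $xy$-entry of $A\widetilde{\Inv}(\mathbf{c})$ is the sum of $\widetilde{\Inv}(\mathbf{c})_T$ over the triangles $T \in \mathcal{T}(G)$ containing $xy$. Since $xy$ is an edge, $xyz$ spans a triangle of $G$ exactly when $z \in N(x,Z) \cap N(y,Z)$. Using $\widetilde{\Inv}(\mathbf{c})_{xyz} = \Inv(\mathbf{c})_{xyz}$ for such $z$, this gives
$$ (A\widetilde{\Inv}(\mathbf{c}))_{xy} = \sum_{z \in N(x,Z)\cap N(y,Z)} \Inv(\mathbf{c})_{xyz}. $$

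Since $\mathbf{c}$ is good, Lemma~\ref{lem:Inv} gives $\mathbf{c}_{xy} = \sum_{z \in Z} \Inv(\mathbf{c})_{xyz}$, where the sum runs over all of $Z$, including $z$ outside the common neighbourhood. Subtracting the previous display,
$$ \Delta\mathbf{c}_{xy} = \mathbf{c}_{xy} - (A\widetilde{\Inv}(\mathbf{c}))_{xy} = \sum_{z \in Z \setminus (N(x,Z)\cap N(y,Z))} \Inv(\mathbf{c})_{xyz}. $$
This is the first claimed equality.

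For the second equality, note that because $xy \in E(G)$, the condition $z \notin N(x,Z)\cap N(y,Z)$ is equivalent to $xyz \notin \mathcal{T}(G)$. So the two index sets coincide.

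There is no real obstacle here. The only point needing care is that Lemma~\ref{lem:Inv} is applied to $\mathbf{c}$ extended by zero to $E(K_{X,Y,Z})$, so that $\Inv(\mathbf{c})_{xyz}$ is defined for every triple $xyz \in X \times Y \times Z$, not only for triangles of $G$. This extension is precisely the convention the paper fixed earlier, and the proof of Lemma~\ref{lem:Inv} used only goodness of $\mathbf{c}$.
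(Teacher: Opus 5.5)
Your proposal is correct and follows the paper's proof essentially step for step. You write $(A\widetilde{\Inv}(\mathbf{c}))_{xy}$ as the sum of $\Inv(\mathbf{c})_{xyz}$ over $z \in N(x,Z)\cap N(y,Z)$, and subtract it from $\mathbf{c}_{xy}=\sum_{z\in Z}\Inv(\mathbf{c})_{xyz}$, which Lemma~\ref{lem:Inv} supplies. You then note that, because $xy$ is an edge, the remaining index set is exactly $\{z : xyz \notin \mathcal{T}(G)\}$.
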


\begin{proof}
By Lemma \ref{lem:Inv}, we have $ \sum_{z \in Z} \Inv(\mathbf{c})_{xyz} = \mathbf{c}_{xy}$. Therefore,
\begin{align*}
\Delta\mathbf{c}_{xy}
&= (\mathbf{c} - A \widetilde{\Inv}(\mathbf{c}))_{xy} = \mathbf{c}_{xy} - \sum_{T \in \mathcal{T}(G) : xy \in T} \widetilde{\Inv}(\mathbf{c})_T \\
&= \mathbf{c}_{xy} - \sum_{T \in \mathcal{T}(G) : xy \in T} \Inv(\mathbf{c})_T = \mathbf{c}_{xy} - \sum_{z \in N(x,Z) \cap N(y,Z)} \Inv(\mathbf{c})_{xyz} \\
&= \sum_{z \in Z} \Inv(\mathbf{c})_{xyz} - \sum_{z \in N(x,Z) \cap N(y,Z)} \Inv(\mathbf{c})_{xyz} \\
&= \sum_{z \in Z \setminus (N(x,Z) \cap N(y,Z))} \Inv(\mathbf{c})_{xyz} = \sum_{z \in Z : xyz \notin \mathcal{T}(G)} \Inv(\mathbf{c})_{xyz}.
\end{align*}
This completes the proof.
\end{proof}

\begin{Lemma}\label{lem:M1Delta}
For $0 \leq \gamma \leq 1/4$ and any good vector $\mathbf{c} \in \mathbb{R}^{E(G)}$, $M_1(\Delta\mathbf{c}) \leq 2\gamma M(\mathbf{c})$.
\end{Lemma}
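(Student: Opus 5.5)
By symmetry it suffices to bound $|\Delta\mathbf{c}_{xy}|$ for an edge $xy\in E(X,Y)$; the cases of edges in $E(X,Z)$ and $E(Y,Z)$ are identical after relabelling the parts. The plan is to combine Lemma~\ref{lem:DeltaCxy} with Lemma~\ref{lem:InvBoundNonTriangle}. Lemma~\ref{lem:DeltaCxy} writes $\Delta\mathbf{c}_{xy}$ as a sum of $\Inv(\mathbf{c})_{xyz}$ over those $z\in Z$ for which $xyz\notin\mathcal{T}(G)$. Lemma~\ref{lem:InvBoundNonTriangle} bounds each such term in absolute value by $M(\mathbf{c})/n$. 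The triangle inequality then gives
$$|\Delta\mathbf{c}_{xy}| \le \bigl|\{z\in Z : xyz\notin\mathcal{T}(G)\}\bigr|\cdot\frac{M(\mathbf{c})}{n}.$$

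The remaining step is to count the bad $z$. Because $xy$ is an edge, $xyz\notin\mathcal{T}(G)$ holds exactly when $z\notin N(x,Z)\cap N(y,Z)$. So the number of bad $z$ is $n-t_{xy}$. From the bound $t_e\ge(1-2\gamma)n$, which was derived at the start of Section~\ref{sec:outline} by inclusion--exclusion, this count is at most $2\gamma n$. Explicitly, $|Z\setminus N(x,Z)|\le\gamma n$ and $|Z\setminus N(y,Z)|\le\gamma n$, since $\hat{\delta}(G)\ge(1-\gamma)n$.

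Putting these together gives $|\Delta\mathbf{c}_{xy}|\le 2\gamma n\cdot M(\mathbf{c})/n=2\gamma M(\mathbf{c})$. Taking the maximum over all edges yields $M_1(\Delta\mathbf{c})\le 2\gamma M(\mathbf{c})$. There is no real obstacle here. The one point needing care is that Lemma~\ref{lem:InvBoundNonTriangle} must be applied only to non-triangle triples, and that restriction is precisely what Lemma~\ref{lem:DeltaCxy} provides. The hypothesis $\gamma\le 1/4$ is only needed so that the standing assumptions of the section are in force.
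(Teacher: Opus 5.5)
Your proof is correct and essentially matches the paper's: you use Lemma~\ref{lem:DeltaCxy} to restrict $\Delta\mathbf{c}_{xy}$ to the $n-t_{xy}\le 2\gamma n$ values of $z$ with $xyz\notin\mathcal{T}(G)$. You then bound each such term by $M(\mathbf{c})/n$ via Lemma~\ref{lem:InvBoundNonTriangle}, exactly as the paper does.
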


\begin{proof}
Without loss of generality, fix an arbitrary edge $xy \in E(X,Y)$. It suffices to show that $| \Delta\mathbf{c}_{xy} | \leq 2\gamma M(\mathbf{c})$. By Lemma \ref{lem:DeltaCxy}, 
$ \Delta\mathbf{c}_{xy} = \sum_{z \in Z \setminus (N(x,Z) \cap N(y,Z))} \Inv(\mathbf{c})_{xyz}. $
Since $\hat{\delta}(G) \geq (1-\gamma)n$, we have $| \{ z \in Z \setminus (N(x,Z) \cap N(y,Z)) \} | \le 2\gamma n$.
Combining this with Lemma \ref{lem:InvBoundNonTriangle}, we have
$$ |\Delta\mathbf{c}_{xy}| \leq \sum_{z \in Z \setminus (N(x,Z) \cap N(y,Z))} |\Inv(\mathbf{c})_{xyz}| \leq 2\gamma n \cdot \frac{M(\mathbf{c})}{n} = 2\gamma M(\mathbf{c}).$$
This completes the proof.
\end{proof}

\begin{Lemma}\label{lem:M2Delta}
For $0 \leq \gamma \leq 1/4$ and any good vector $\mathbf{c} \in \mathbb{R}^{E(G)}$, $M_2(\Delta\mathbf{c}) \leq (\gamma + \gamma^2)M(\mathbf{c})$.
\end{Lemma}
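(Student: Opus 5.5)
The plan is to fix a vertex and write $S_{\Delta\mathbf{c}}$ at that vertex as a sum of $\Inv(\mathbf{c})$ over non-triangles. A direct count gives only a bound of order $2\gamma$; to get $\gamma+\gamma^2$, I will move part of the sum onto a smaller set using the marginal identities of Lemma~\ref{lem:Inv}.

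By symmetry it suffices to fix $x\in X$ and show $|S_{\Delta\mathbf{c}}(x)|\le (\gamma+\gamma^2)nM(\mathbf{c})$. Since $\Delta\mathbf{c}$ is good and vanishes off $E(G)$, Lemma~\ref{lem:DeltaCxy} gives
$$S_{\Delta\mathbf{c}}(x)=\sum_{y\in N(x,Y)}\Delta\mathbf{c}_{xy}=\sum_{y\in N(x,Y)}\ \sum_{z:\,xyz\notin\mathcal{T}(G)}\Inv(\mathbf{c})_{xyz}.$$
There are about $d_x\cdot 2\gamma n$ terms, each bounded via Lemma~\ref{lem:InvBoundNonTriangle}. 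That only yields $2\gamma M(\mathbf{c})$, so the pairs must be split more carefully.

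Split the pairs $(y,z)$ with $y\in N(x,Y)$ and $xyz\notin\mathcal{T}(G)$ into two classes:
\begin{enumerate}
\item[(a)] $z\in N(x,Z)$ and $yz\notin E(G)$;
\item[(b)] $z\notin N(x,Z)$.
\end{enumerate}
Class (a) has at most $d_x\cdot\gamma n\le \gamma n^2$ pairs, because each $y$ has at most $\gamma n$ non-neighbours in $Z$. Each such triple is a non-triangle, so Lemma~\ref{lem:InvBoundNonTriangle} bounds the class-(a) contribution by $\gamma n^2\cdot M(\mathbf{c})/n=\gamma nM(\mathbf{c})$.

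For class (b), fix $z\notin N(x,Z)$. By Lemma~\ref{lem:Inv} (with $\mathbf{c}$ extended by zero), $\sum_{y\in Y}\Inv(\mathbf{c})_{xyz}=\mathbf{c}_{xz}=0$. Hence
$$\sum_{y\in N(x,Y)}\Inv(\mathbf{c})_{xyz}=-\sum_{y\in Y\setminus N(x,Y)}\Inv(\mathbf{c})_{xyz}.$$
The class-(b) contribution is therefore a sum over pairs with $y\notin N(x,Y)$ and $z\notin N(x,Z)$, at most $(\gamma n)^2$ of them. All these triples are non-triangles, so Lemma~\ref{lem:InvBoundNonTriangle} bounds this part by $\gamma^2 n^2\cdot M(\mathbf{c})/n=\gamma^2 nM(\mathbf{c})$.

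Adding the two classes gives $|S_{\Delta\mathbf{c}}(x)|\le(\gamma+\gamma^2)nM(\mathbf{c})$. The same argument applies to $y\in Y$ and $z\in Z$, so $M_2(\Delta\mathbf{c})\le(\gamma+\gamma^2)M(\mathbf{c})$.

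The main obstacle is the class-(b) terms: bounding them directly costs another $\gamma$. The key step is the cancellation from Lemma~\ref{lem:Inv}, valid because $\mathbf{c}_{xz}=0$ for non-edges, which exchanges $N(x,Y)$ for its complement. The bounds $d(x,Y)\le n$ and $|Y\setminus N(x,Y)|,|Z\setminus N(x,Z)|\le\gamma n$ come from $\hat{\delta}(G)\ge(1-\gamma)n$.
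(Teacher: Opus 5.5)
Your proposal is correct and follows essentially the same route as the paper. The paper also splits the non-triangle pairs into $z\notin N(x,Z)$ and $z\in N(x,Z)\setminus N(y,Z)$. It bounds the second class directly by $\gamma n M(\mathbf{c})$. For the first class it uses the cancellation $\sum_{y\in Y}\Inv(\mathbf{c})_{xyz}=\mathbf{c}_{xz}=0$ to move the sum onto $Y\setminus N(x,Y)$, which gives $\gamma^2 n M(\mathbf{c})$.
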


\begin{proof}
Without loss of generality, fix an arbitrary vertex $x \in X$. It suffices to show that $|S_{\Delta\mathbf{c}}(x)| \leq n (\gamma + \gamma^2)M(\mathbf{c})$. By Lemma \ref{lem:DeltaCxy}, we have
\begin{align*}
S_{\Delta\mathbf{c}}(x)
&= \sum_{y \in Y} \Delta\mathbf{c}_{xy} = \sum_{y \in N(x,Y)} \Delta\mathbf{c}_{xy} = \sum_{y \in N(x,Y)} \left( \sum_{z \in Z : xyz \notin \mathcal{T}(G)} \Inv(\mathbf{c})_{xyz} \right) \\
&= \sum_{y \in N(x,Y)} \left( \sum_{z \in Z \setminus N(x,Z)} \Inv(\mathbf{c})_{xyz} + \sum_{z \in N(x,Z) \setminus N(y,Z)} \Inv(\mathbf{c})_{xyz} \right) \\
&= \sum_{z \in Z \setminus N(x,Z)} \sum_{y \in N(x,Y)} \Inv(\mathbf{c})_{xyz} + \sum_{y \in N(x,Y)} \sum_{z \in N(x,Z) \setminus N(y,Z)} \Inv(\mathbf{c})_{xyz}.
\end{align*}
We bound these two terms separately.

First term: For $z \in Z \setminus N(x,Z)$, since $\mathbf{c} \in \mathbb{R}^{E(G)}$, we have $\mathbf{c}_{xz} = 0$. By Lemma \ref{lem:Inv}, $\sum_{y \in Y} \Inv(\mathbf{c})_{xyz} = \mathbf{c}_{xz} = 0$. Therefore,
$$ \sum_{y \in N(x,Y)} \Inv(\mathbf{c})_{xyz} = - \sum_{y \in Y \setminus N(x,Y)} \Inv(\mathbf{c})_{xyz}. $$
Combining this with Lemma \ref{lem:InvBoundNonTriangle}, we have
\begin{align*}
\left| \sum_{z \in Z \setminus N(x,Z)} \sum_{y \in N(x,Y)} \Inv(\mathbf{c})_{xyz} \right| &\leq \sum_{z \in Z \setminus N(x,Z)} \left| \sum_{y \in Y \setminus N(x,Y)} \Inv(\mathbf{c})_{xyz} \right| \\
&\leq \gamma n \cdot \gamma n \cdot \frac{M(\mathbf{c})}{n} = n \gamma^2 M(\mathbf{c}).
\end{align*}

Second term: For any given $y \in Y$, we have $|N(x,Z) \setminus N(y,Z)| \leq |Z \setminus N(y,Z)| \leq \gamma n$. Combining this with Lemma \ref{lem:InvBoundNonTriangle}, we have
\begin{align*}
\left| \sum_{y \in N(x,Y)} \sum_{z \in N(x,Z) \setminus N(y,Z)} \Inv(\mathbf{c})_{xyz} \right| &\leq \sum_{y \in N(x,Y)} \left| \sum_{z \in N(x,Z) \setminus N(y,Z)} \Inv(\mathbf{c})_{xyz} \right| \\
&\leq \sum_{y \in N(x,Y)} \gamma n \cdot \frac{M(\mathbf{c})}{n} \leq n \cdot \gamma n \cdot \frac{M(\mathbf{c})}{n} = n \gamma M(\mathbf{c}).
\end{align*}

Therefore, $ | S_{\Delta\mathbf{c}}(x) | \leq n \gamma^2 M(\mathbf{c}) + n \gamma M(\mathbf{c}) = n (\gamma + \gamma^2) M(\mathbf{c})$.
\end{proof}

\begin{Lemma}\label{lem:M3Delta}
For $0 \leq \gamma \leq 1/4$ and any good vector $\mathbf{c} \in \mathbb{R}^{E(G)}$, $M_3(\Delta\mathbf{c}) \leq 3\gamma^2 M(\mathbf{c})$.
\end{Lemma}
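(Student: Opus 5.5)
We need $|S(\Delta\mathbf{c})| \le 3\gamma^2 n^2 M(\mathbf{c})$. The plan is to express $S(\Delta\mathbf{c})$ as a sum of $\Inv(\mathbf{c})$ over non-triangle triples and then use the vanishing row and column sums of $\Inv(\mathbf{c})$ to cancel the bulk.

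\emph{Step 1: write $S(\Delta\mathbf{c})$ as a sum over non-triangles.} By Lemma \ref{lem:DeltaCxy}, and since $\Delta\mathbf{c}$ vanishes off $E(G)$,
$$S(\Delta\mathbf{c})=\sum_{xy\in E(X,Y)}\ \sum_{z:\,xyz\notin\mathcal{T}(G)}\Inv(\mathbf{c})_{xyz}.$$
Call a triple $xyz$ with $xy\in E(G)$ but $xyz\notin\mathcal{T}(G)$ \emph{bad}. Each bad triple has $xz\notin E(G)$ or $yz\notin E(G)$. Split the bad triples into the disjoint sets
$$B_1=\{xyz:\ xy\in E,\ xz\notin E\},\qquad B_2=\{xyz:\ xy\in E,\ xz\in E,\ yz\notin E\}.$$

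\emph{Step 2: bound the sum over $B_1$.} Fix a non-edge $xz$. By Lemma \ref{lem:Inv}, $\sum_{y\in Y}\Inv(\mathbf{c})_{xyz}=\mathbf{c}_{xz}=0$. Hence the sum over $y\in N(x,Y)$ equals minus the sum over $y\notin N(x,Y)$. That complementary set has at most $\gamma n$ elements, and each of its terms is a non-triangle, so Lemma \ref{lem:InvBoundNonTriangle} bounds each term by $M(\mathbf{c})/n$. There are at most $\gamma n\cdot n$ non-edges $xz$ in total, since each $x$ has at most $\gamma n$ non-neighbours in $Z$. The $B_1$ contribution is therefore at most $\gamma n^2\cdot\gamma n\cdot M/n=\gamma^2n^2M$.

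\emph{Step 3: bound the sum over $B_2$ by a symmetric trick.} Fix a non-edge $yz$. The $B_2$ triples through it are indexed by $x\in N(y,X)\cap N(z,X)$. Using $\sum_{x\in X}\Inv(\mathbf{c})_{xyz}=\mathbf{c}_{yz}=0$, this sum equals minus the sum over $x\in X\setminus(N(y,X)\cap N(z,X))$. That set has at most $2\gamma n$ elements, all giving non-triangles. So each non-edge $yz$ contributes at most $2\gamma n\cdot M/n$. Summing over at most $\gamma n^2$ non-edges gives $2\gamma^2n^2M$.

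\emph{Conclusion.} Adding the two contributions gives $|S(\Delta\mathbf{c})|\le 3\gamma^2n^2M(\mathbf{c})$, which is $M_3(\Delta\mathbf{c})\le3\gamma^2M(\mathbf{c})$.

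The main obstacle is the bookkeeping in the split. The sets $B_1$ and $B_2$ must be disjoint and must cover all bad triples, and in each case the range of the summation variable must be exactly the complement set to which the zero-sum identity from Lemma \ref{lem:Inv} applies. In particular, Step 3 needs the condition $xz\in E$, which is what forces the range $N(y,X)\cap N(z,X)$ with its complement of size at most $2\gamma n$.
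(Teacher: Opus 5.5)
Your proposal is correct and follows the paper's proof essentially step for step. The paper uses the same split of the non-triangle triples into those with $xz\notin E(G)$ and those with $xz\in E(G)$, $yz\notin E(G)$, which it calls $S_1$ and $S_2$. For each part it applies the same zero-sum identity from Lemma~\ref{lem:Inv} to pass to complementary index sets of sizes at most $\gamma n$ and $2\gamma n$, obtaining the identical bounds $\gamma^2n^2M(\mathbf{c})$ and $2\gamma^2n^2M(\mathbf{c})$.
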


\begin{proof}
Without loss of generality, consider the sum $\sum_{xy \in E(X,Y)} \Delta\mathbf{c}_{xy}$. By Lemma \ref{lem:DeltaCxy}, 
$$ \sum_{xy \in E(X,Y)} \Delta\mathbf{c}_{xy} = \sum_{xy \in E(X,Y)} \sum_{z \in Z \setminus (N(x,Z) \cap N(y,Z))} \Inv(\mathbf{c})_{xyz}=S_1+S_2, $$
where
$$ S_1 = \sum_{xy \in E(X,Y)} \sum_{z \in Z \setminus N(x,Z)} \Inv(\mathbf{c})_{xyz} \quad \text{and} \quad S_2 = \sum_{xy \in E(X,Y)} \sum_{z \in N(x,Z) \setminus N(y,Z)} \Inv(\mathbf{c})_{xyz}. $$

To evaluate $|S_1|$, let $z \in Z \setminus N(x,Z)$. Then $xz \notin E(G)$. Since $\mathbf{c} \in \mathbb{R}^{E(G)}$, it follows that $\mathbf{c}_{xz} = 0$. By Lemma \ref{lem:Inv}, we have 
$ \sum_{y \in Y} \Inv(\mathbf{c})_{xyz} = \mathbf{c}_{xz} = 0$.
Thus,
$$ \sum_{y \in N(x,Y)} \Inv(\mathbf{c})_{xyz} = - \sum_{y \in Y \setminus N(x,Y)} \Inv(\mathbf{c})_{xyz}. $$
Combining this with Lemma \ref{lem:InvBoundNonTriangle}, we deduce
\begin{align*}
|S_1| &= \left| \sum_{xy \in E(X,Y)} \sum_{z \in Z \setminus N(x,Z)} \Inv(\mathbf{c})_{xyz} \right| = \left| \sum_{xz \notin E(X,Z)} \sum_{y \in N(x,Y)} \Inv(\mathbf{c})_{xyz} \right| \\
&\leq \sum_{xz \notin E(X,Z)} \left| \sum_{y \in N(x,Y)} \Inv(\mathbf{c})_{xyz} \right| = \sum_{xz \notin E(X,Z)} \left| \sum_{y \in Y \setminus N(x,Y)} \Inv(\mathbf{c})_{xyz} \right| \\
&\leq \gamma n^2 \cdot \gamma n \cdot \frac{M(\mathbf{c})}{n} = \gamma^2 n^2 M(\mathbf{c}).
\end{align*}
To evaluate $|S_2|$, let $z \in N(x,Z) \setminus N(y,Z)$. Then $yz \notin E(G)$, and so $\mathbf{c}_{yz} = 0$. By Lemma \ref{lem:Inv}, we have $\sum_{x \in X} \Inv(\mathbf{c})_{xyz} = \mathbf{c}_{yz} = 0$. 
Thus,
$$ \sum_{x \in N(y,X) \cap N(z,X)} \Inv(\mathbf{c})_{xyz} = - \sum_{x \in X \setminus (N(y,X) \cap N(z,X))} \Inv(\mathbf{c})_{xyz}. $$
Since $|X \setminus (N(y,X) \cap N(z,X))| \leq 2\gamma n$ and $|(Y\times Z)\setminus E(Y,Z)| \leq \gamma n^2$, we apply Lemma \ref{lem:InvBoundNonTriangle} to obtain
\begin{align*}
|S_2| &= \left| \sum_{xy \in E(X,Y)} \sum_{z \in N(x,Z) \setminus N(y,Z)} \Inv(\mathbf{c})_{xyz} \right| = \left| \sum_{yz \notin E(Y,Z)} \sum_{x \in N(y,X) \cap N(z,X)} \Inv(\mathbf{c})_{xyz} \right| \\
&\leq \sum_{yz \notin E(Y,Z)} \left| \sum_{x \in N(y,X) \cap N(z,X)} \Inv(\mathbf{c})_{xyz} \right| = \sum_{yz \notin E(Y,Z)} \left| \sum_{x \in X \setminus (N(y,X) \cap N(z,X))} \Inv(\mathbf{c})_{xyz} \right| \\
&\leq \gamma n^2 \cdot 2\gamma n \cdot \frac{M(\mathbf{c})}{n} = 2\gamma^2 n^2 M(\mathbf{c}).
\end{align*}
Therefore, $| \sum_{xy \in E(X,Y)} \Delta\mathbf{c}_{xy}| \leq |S_1| + |S_2| \leq 3\gamma^2 n^2 M(\mathbf{c})$, which yields $M_3(\Delta\mathbf{c}) \leq 3\gamma^2 M(\mathbf{c})$.
\end{proof}

Recall that $M(\mathbf{c})=2M_1(\mathbf{c}) + 3M_2(\mathbf{c}) + M_3(\mathbf{c})$. Combining Lemmas \ref{lem:M1Delta}, \ref{lem:M2Delta}, and \ref{lem:M3Delta}, we have the following corollary.

\begin{Corollary}\label{cor:MDelta}
For $0 \leq \gamma \leq 1/4$ and any good vector $\mathbf{c} \in \mathbb{R}^{E(G)}$, $M(\Delta\mathbf{c}) \leq (7\gamma + 6\gamma^2)M(\mathbf{c})$.
\end{Corollary}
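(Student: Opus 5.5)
The bound follows by substituting Lemmas \ref{lem:M1Delta}, \ref{lem:M2Delta}, and \ref{lem:M3Delta} into the definition
$$M(\Delta\mathbf{c}) = 2M_1(\Delta\mathbf{c}) + 3M_2(\Delta\mathbf{c}) + M_3(\Delta\mathbf{c}).$$
First we check that the definition applies. By Lemmas \ref{lem:repairable1} and \ref{lem:repairable2}, $\Delta\mathbf{c}$ is good, so $M_1(\Delta\mathbf{c})$, $M_2(\Delta\mathbf{c})$ and $M_3(\Delta\mathbf{c})$ are well defined. The three lemmas hold for every good $\mathbf{c}$ and every $0 \leq \gamma \leq 1/4$, so they may be used directly.

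Substituting $M_1(\Delta\mathbf{c}) \leq 2\gamma M(\mathbf{c})$, $M_2(\Delta\mathbf{c}) \leq (\gamma+\gamma^2) M(\mathbf{c})$ and $M_3(\Delta\mathbf{c}) \leq 3\gamma^2 M(\mathbf{c})$, we get
$$M(\Delta\mathbf{c}) \leq \bigl(4\gamma + 3\gamma + 3\gamma^2 + 3\gamma^2\bigr) M(\mathbf{c}) = (7\gamma + 6\gamma^2) M(\mathbf{c}).$$

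The only step needing care is checking that the weights $2,3,1$ in $M$ line up with the coefficients from the three lemmas. They do: the linear terms give $2\cdot 2\gamma + 3\gamma = 7\gamma$, and the quadratic terms give $3\gamma^2 + 3\gamma^2 = 6\gamma^2$. So the proof is purely mechanical and has no real obstacle.
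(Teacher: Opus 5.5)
Your proposal is correct and is exactly the paper's argument. You substitute Lemmas~\ref{lem:M1Delta}, \ref{lem:M2Delta}, and~\ref{lem:M3Delta} into $M(\Delta\mathbf{c}) = 2M_1(\Delta\mathbf{c}) + 3M_2(\Delta\mathbf{c}) + M_3(\Delta\mathbf{c})$, giving $4\gamma + 3(\gamma+\gamma^2) + 3\gamma^2 = 7\gamma + 6\gamma^2$.
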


Now we are in a position to prove Lemma \ref{lem:estimation}.

\begin{proof}[\textbf{\textup{Proof of Lemma \ref{lem:estimation}.}}]
By the recursive definition of $\mathbf{b}^{(k)}$, we have $\mathbf{b}^{(k+1)} = \Delta\mathbf{b}^{(k)}$. It follows from Corollary \ref{cor:MDelta} that $M(\mathbf{b}^{(k+1)}) \leq (7\gamma + 6\gamma^2) M(\mathbf{b}^{(k)})$. Thus, $ M(\mathbf{b}^{(k)}) \leq (7\gamma + 6\gamma^2)^k M(\mathbf{b}^{(0)})$. Recall that $\mathbf{b}^{(0)} = \mathbf{b}^\star$. By Corollary \ref{cor:Mbound}, $M(\mathbf{b}^{(0)}) \leq \frac{10\gamma}{3(1-2\gamma)}$. Therefore,
$$ M(\mathbf{b}^{(k)}) \leq (7\gamma + 6\gamma^2)^k \frac{10\gamma}{3(1-2\gamma)}.$$ 
This proves (1). Furthermore, combining Lemmas \ref{lem:M1Delta}, \ref{lem:M2Delta}, and \ref{lem:M3Delta}, we have
$$ 3M_1(\mathbf{b}^{(k+1)}) + 3M_2(\mathbf{b}^{(k+1)}) + M_3(\mathbf{b}^{(k+1)}) \leq  (9\gamma + 6\gamma^2) M(\mathbf{b}^{(k)}) \leq (9\gamma + 6\gamma^2)(7\gamma + 6\gamma^2)^k \frac{10\gamma}{3(1-2\gamma)}. $$
This proves (2).
\end{proof}

\section{Concluding remarks}\label{sec:con}

Let $r \geq 4$. The existence of $r-2$ mutually orthogonal Latin squares is equivalent to the existence of an $r$-clique decomposition of a complete balanced $r$-partite graph (cf. \cite{BKLOT}). Montgomery \cite{Montgomery} investigated the partite minimum degree threshold for a balanced $r$-partite graph to admit a fractional $r$-clique decomposition, and thereby proved that, for sufficiently large $n$, any $r-2$ mutually orthogonal $\frac{1}{10^6 r^3}$-dense partial Latin squares of order $n$ can be extended to $r-2$ mutually orthogonal Latin squares of order $n$. Very recently, the partite minimum degree threshold for a balanced $r$-partite graph to admit a fractional $s$-clique decomposition has been studied in \cite{FLY}, where $r > s \geq 3$. It would be interesting to investigate whether the method developed in this paper can improve the known bounds for these thresholds.

Very recently, Delcourt and Postle \cite{DP26} made a breakthrough by proving that for sufficiently large $n$, every triangle-divisible graph on $n$ vertices with minimum degree at least $\frac{3}{4}n$ admits a triangle decomposition. Inspired by this result, one may naturally ask whether their idea can be employed to investigate the problem of triangle decomposition of tripartite graphs.

\end{document}